\newtheorem{theorem}{Theorem}[section]
\newtheorem{lemma}[theorem]{Lemma}
\newtheorem{corollary}[theorem]{Corollary}
\title{Connectivity, toughness, spanning trees of bounded degree, and the spectrum of regular graphs}
\author{Sebastian M. Cioab\u{a}\thanks{
Department of Mathematical Sciences,
University of Delaware, Newark, DE 19716, USA. E-mail: {\tt cioaba@udel.edu}. This work was partially supported by the National Security Agency grant H98230-13-0267.} \,  and 
Xiaofeng Gu\thanks{
Department of Mathematics, University of West Georgia, Carrollton, GA 30118, USA. E-mail: {\tt xgu@westga.edu}}
}
\begin{document}
\date{\today}
\maketitle

\begin{center}
{\em Dedicated to the memory of Professor Miroslav Fiedler}
\end{center}

\noindent
\begin{abstract}
In this paper, we present some new results describing connections between the spectrum of a regular graph and its generalized connectivity, toughness, and the existence of spanning trees with bounded degree.
\end{abstract}

{\small \noindent {\bf Key words:}  Eigenvalue, connectivity, toughness, spanning $k$-tree}

\section{Introduction}

The spectrum of a graph is related to many important combinatorial parameters. In his fundamental and ground-breaking work, Fiedler \cite{F73,F75} determined close connections between the Laplacian eigenvalues and eigenvectors of a graph and combinatorial parameters such as its vertex-connectivity or edge-connectivity. Fiedler's work stimulated tremendous progress and growth in spectral graph theory since then.

In this paper, we study the connections between the spectrum of a regular graph and other combinatorial parameters such as generalized connectivity, toughness and the existence of spanning trees with bounded degree. 

Throughout this paper, we consider only finite, undirected and simple graphs. Given a graph $G=(V,E)$ of order $n$, we denote by $\lambda_1(G) \geq \lambda_2(G)\geq \dots \geq \lambda_n(G)$ the eigenvalues of its adjacency matrix. When the graph $G$ is clear from the context, we use $\lambda_i$ to denote $\lambda_i(G)$. We also use the notation $\lambda=\max\{|\lambda_2|, |\lambda_n|\}$. If $G$ is $d$-regular, then $\lambda_1=d$ and the multiplicity of $d$ equals the number of components of $G$.   We use $\kappa(G), \kappa'(G)$ and $c(G)$ to denote the vertex-connectivity, the edge-connectivity and the number of components of a graph $G$, respectively. For any undefined graph theoretic notions, see Bondy and Murty \cite{BoMu08} or Brouwer and Haemers \cite{BrHa12}.

One of well-known results of Fiedler \cite{F73} implies that the vertex-connectivity of a $d$-regular graph is at least $d-\lambda_2$. This result was improved in certain ranges by Krivelevich and Sudakov \cite{KS06} who showed that the vertex-connectivity of a $d$-regular graph is at least $d-\frac{36\lambda^2}{d}$. Given an integer $l\ge 2$, Chartrand, Kapoor, Lesniak and Lick \cite{CKLL84} defined the  {\bf $l$-connectivity $\kappa_l(G)$} of a graph $G$ to be the minimum number of vertices of $G$ whose removal produces a disconnected graph with at least $l$ components or a graph with fewer than $l$ vertices. Thus $\kappa_l(G)=0$ 
if and only if $c(G)\ge l$ or $|V(G)|\le l-1$. Note that $\kappa_2 (G)=\kappa (G)$.
For $k\ge 1$, a graph $G$ is called {\bf $(k,l)$-connected} if $\kappa_l\ge k$.
See \cite{CKLL84,DaOS99,GuLY11,Oell87} for more about $l$-connectivity and $(k,l)$-connected graphs.
In particular, a structural characterization of  $(2,l)$-connected graphs is presented in \cite{GuLY11}, as a generalization of the standard characterization of $2$-connected graphs (see \cite[Chapter 5]{BoMu08}).

Our results relating generalized connectivity to the spectrum of a regular graph are below.

\begin{theorem}\label{L-conn}
Let $l,k$ be integers with $l\ge k\ge 2$. For any connected $d$-regular graph $G$ with 
$|V(G)|\ge k+l-1$, $d\ge 3$ and edge connectivity $\kappa'$, if $\kappa'=d$, or,
if $\kappa' < d$ and
\[
\lambda_{\lceil\frac{(l-k+1)d}{d-\kappa'}\rceil} (G)
< \left\{ \begin{array}{ll}
        \frac{d-2 + \sqrt{d^2 +12}}{2}, & \textrm{if $d$ is even},\\
        \frac{d-2 + \sqrt{d^2 +8}}{2}, & \textrm{if $d$ is odd},
        \end{array} \right.
\]
then $\kappa_l(G)\ge k$.
\end{theorem}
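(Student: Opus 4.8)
The plan is to argue by contradiction: assume $\kappa_l(G) < k$, so there is a set $S\subseteq V(G)$ with $s:=|S|\le k-1$ whose removal witnesses this. Since $|V(G)|\ge k+l-1\ge s+l$, at least $l$ vertices survive, so the witness must be that $G-S$ has $m\ge l$ components $C_1,\dots,C_m$; as $G$ is connected, $s\ge 1$. Write $\theta$ for the threshold on the right-hand side of the displayed inequality and, for disjoint vertex sets $X,Y$, let $e(X,Y)$ be the number of edges between them. Two elementary facts drive everything. First, $\sum_{i=1}^m e(C_i,S)=\sum_{v\in S}\deg_G(v)-2e(S)\le ds$. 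Second, each $C_i$ is a nonempty proper subset of $V(G)$ all of whose boundary edges land in $S$, so edge-connectivity gives $e(C_i,S)\ge\kappa'$ for every $i$. When $\kappa'=d$ these already force $md\le ds$, hence $m\le s\le k-1<l$, a contradiction; this settles the first case with no eigenvalue input.

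For $\kappa'<d$ the counting must be sharpened, and here the spectrum enters through a lemma I would prove first: \emph{if $H$ is a connected induced subgraph of a $d$-regular graph with at most $d-1$ edges leaving it, then $\lambda_1(H)\ge\theta$.} Such an $H$ is almost regular, since its total degree deficiency $\sum_{v\in V(H)}(d-\deg_H v)$ equals the number of leaving edges and is thus at most $d-1$; a short argument then shows $|V(H)|\ge d+1$. One identifies the extremal graph as $K_{d+1}$ with a maximum matching of size $t=\lfloor(d-1)/2\rfloor$ removed: the equitable partition of $K_{d+1}-tK_2$ into its $2t$ matched and $d+1-2t$ unmatched vertices has quotient matrix $\left(\begin{smallmatrix}2t-2 & d+1-2t\\ 2t & d-2t\end{smallmatrix}\right)$, whose top eigenvalue is the larger root of $x^2-(d-2)x-2(d-t)=0$, and $t=\lfloor(d-1)/2\rfloor$ reproduces exactly the two cases of $\theta$. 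I expect this lemma to be the main obstacle: the naive average-degree estimate $\lambda_1(H)\ge d-\tfrac{d-1}{|V(H)|}$ already falls just short of $\theta$, so one genuinely has to show that no admissible $H$ beats $K_{d+1}-tK_2$ (e.g.\ that enlarging $H$ past $d+1$ vertices, or using a non-matching deficiency pattern, only raises $\lambda_1$), rather than closing the gap with a single inequality.

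Granting the lemma, call $C_i$ \emph{heavy} if $\lambda_1(G[C_i])\ge\theta$ and \emph{light} otherwise, with counts $p$ and $q=m-p$. The contrapositive of the lemma says every light component has $e(C_i,S)\ge d$, while every component has $e(C_i,S)\ge\kappa'$. Feeding these into the first counting fact gives $qd+p\kappa'\le ds$; writing $q=m-p$ and rearranging yields $p(d-\kappa')\ge d(m-s)\ge d(l-(k-1))=d(l-k+1)$, so $p\ge\lceil (l-k+1)d/(d-\kappa')\rceil=:r$.

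It remains to translate $p\ge r$ into information about $\lambda_r(G)$ via interlacing. The principal submatrix of the adjacency matrix obtained by deleting the rows and columns of $S$ is the block-diagonal matrix $\bigoplus_{i=1}^m A(G[C_i])$, and each of the $p$ heavy blocks contributes an eigenvalue $\ge\theta$; hence this submatrix has at least $p$ eigenvalues $\ge\theta$, i.e.\ its $p$-th largest is $\ge\theta$. Cauchy interlacing then gives $\lambda_p(G)\ge\theta$, and since $r\le p$ we conclude $\lambda_r(G)\ge\lambda_p(G)\ge\theta$, contradicting the hypothesis $\lambda_r(G)<\theta$. Hence no separating set $S$ with $|S|\le k-1$ can create $l$ components, i.e.\ $\kappa_l(G)\ge k$.
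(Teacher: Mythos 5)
Your argument is essentially the paper's own proof: the same contradiction setup, the same edge-counting that forces at least $\lceil\frac{(l-k+1)d}{d-\kappa'}\rceil$ components to have fewer than $d$ boundary edges (you count the ``heavy'' components directly where the paper argues by contradiction on the count, but the inequalities are identical), and the same Cauchy interlacing step at the end. The spectral lemma you flag as the main unresolved obstacle is precisely the paper's Lemma~2.3, a quoted result of Cioab\u{a} and Wong \cite{CiWo14} whose extremal graph $X_d$ is exactly your $K_{d+1}$ minus a maximum matching of size $\lfloor (d-1)/2\rfloor$, so the paper does not prove it either and you may simply cite it; with that citation your write-up is complete and correct.
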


\begin{corollary}\label{L-conn2}
Let $l\ge 2$. For any connected $d$-regular graph $G$ with $|V(G)|\ge l+1$ and $d\ge 3$, if 
\[
\lambda_l (G)
< \left\{ \begin{array}{ll}
        \frac{d-2 + \sqrt{d^2 +12}}{2}, & \textrm{if $d$ is even},\\
        \frac{d-2 + \sqrt{d^2 +8}}{2}, & \textrm{if $d$ is odd},
        \end{array} \right.
\]
then $\kappa_l(G)\ge 2$.
\end{corollary}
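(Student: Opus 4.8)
The plan is to obtain Corollary~\ref{L-conn2} as the special case $k=2$ of Theorem~\ref{L-conn}. Setting $k=2$, the theorem's hypotheses $l\ge k\ge 2$ and $|V(G)|\ge k+l-1$ reduce exactly to the corollary's $l\ge 2$ and $|V(G)|\ge l+1$, while its conclusion $\kappa_l(G)\ge k$ becomes $\kappa_l(G)\ge 2$. So the whole task is to show that the corollary's eigenvalue hypothesis $\lambda_l(G)<f(d)$ (writing $f(d)$ for the threshold on the right-hand side of the displayed bound, according to the parity of $d$) implies the hypothesis of the theorem. First I would dispose of the case $\kappa'=d$: here the theorem applies with no eigenvalue condition required at all, so $\kappa_l(G)\ge 2$ follows immediately and there is nothing more to check.

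It then remains to treat $\kappa'<d$, where I must verify the theorem's condition, which for $k=2$ involves the index $\lceil (l-k+1)d/(d-\kappa')\rceil=\lceil (l-1)d/(d-\kappa')\rceil$; that is, I need
\[
\lambda_{\lceil (l-1)d/(d-\kappa')\rceil}(G)<f(d).
\]
Since the adjacency eigenvalues are arranged in non-increasing order, $\lambda_i\le\lambda_j$ whenever $i\ge j$. Hence it suffices to show that the index $\lceil (l-1)d/(d-\kappa')\rceil$ is at least $l$, for then $\lambda_{\lceil (l-1)d/(d-\kappa')\rceil}\le\lambda_l<f(d)$, and Theorem~\ref{L-conn} yields $\kappa_l(G)\ge 2$.

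The key (and only genuinely computational) step is the inequality $\lceil (l-1)d/(d-\kappa')\rceil\ge l$. Because $G$ is connected with $|V(G)|\ge l+1\ge 3$, we have $\kappa'\ge 1$, and together with $\kappa'<d$ this gives $1\le d-\kappa'\le d-1<d$. Since $l-1\ge 1>0$, dividing the positive numerator $(l-1)d$ by a quantity strictly smaller than $d$ strictly increases the value, so $\frac{(l-1)d}{d-\kappa'}>\frac{(l-1)d}{d}=l-1$; as $l-1$ is an integer strictly exceeded by the fraction, its ceiling is at least $l$, as needed. I do not expect a real obstacle here, since the argument is essentially a matter of matching hypotheses and tracking the monotonicity of eigenvalues; the one point that demands care is this ceiling estimate, specifically invoking $\kappa'\ge 1$ to make $d-\kappa'<d$ strict and $l\ge 2$ to make $l-1$ positive, both of which are guaranteed by the standing hypotheses.
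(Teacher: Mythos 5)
Your proposal is correct and is exactly the intended derivation: the paper simply asserts that Corollary~\ref{L-conn2} follows "obviously" from Theorem~\ref{L-conn}, and your argument supplies the routine details (specializing to $k=2$, handling $\kappa'=d$ separately, and using $\kappa'\ge 1$ to get $\lceil (l-1)d/(d-\kappa')\rceil\ge l$ so that monotonicity of the eigenvalue sequence transfers the hypothesis). No discrepancy with the paper's approach.
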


\begin{corollary}\label{2-conn}
For any connected $d$-regular graph $G$ with $d\ge 3$, if 
\[
\lambda_2 (G)
< \left\{ \begin{array}{ll}
        \frac{d-2 + \sqrt{d^2 +12}}{2}, & \textrm{if $d$ is even},\\
        \frac{d-2 + \sqrt{d^2 +8}}{2}, & \textrm{if $d$ is odd},
        \end{array} \right.
\]
then $\kappa(G)\ge 2$.
\end{corollary}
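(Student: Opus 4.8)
The plan is to obtain Corollary~\ref{2-conn} as the special case $l=2$ of Corollary~\ref{L-conn2}, so that essentially no new argument is needed beyond checking that the hypotheses line up.

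First I would invoke the identity $\kappa_2(G)=\kappa(G)$ recorded in the introduction: it follows directly from the definition of $l$-connectivity, since removing vertices so as to leave at least two components is exactly the operation of disconnecting the graph. Hence the desired conclusion $\kappa(G)\ge 2$ is literally the assertion $\kappa_2(G)\ge 2$.

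Next I would match the hypotheses of Corollary~\ref{L-conn2} at $l=2$. Its eigenvalue hypothesis is a bound on $\lambda_l(G)$, which for $l=2$ is precisely the assumed bound on $\lambda_2(G)$, with the identical even/odd threshold. The only remaining hypothesis there is the order condition $|V(G)|\ge l+1=3$. Since $G$ is $d$-regular with $d\ge 3$, every vertex has at least three neighbours, so $|V(G)|\ge d+1\ge 4>3$; thus the order condition holds automatically and is never binding under the standing assumption $d\ge 3$.

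Applying Corollary~\ref{L-conn2} with $l=2$ then yields $\kappa_2(G)\ge 2$, i.e.\ $\kappa(G)\ge 2$, as claimed. Because the statement is a direct specialization, there is no genuine obstacle to overcome; the single point that deserves a moment's attention is confirming that the order requirement $|V(G)|\ge 3$ is already subsumed by the regularity hypothesis $d\ge 3$, which it is.
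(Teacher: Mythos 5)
Your proposal is correct and matches the paper's approach: the authors simply note that Corollary~\ref{2-conn} (like Corollary~\ref{L-conn2}) follows from Theorem~\ref{L-conn}, which is exactly your specialization $l=k=2$ together with $\kappa_2(G)=\kappa(G)$ and the observation that $|V(G)|\ge d+1\ge 4$ makes the order hypothesis automatic.
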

Corollary~\ref{2-conn} is a slight improvement of previous results of 
Krivelevich and Sudakov \cite[Theorem 4.1]{KS06} and Fiedler \cite[Theorem 4.1]{F73}.

The toughness $t(G)$ of a connected graph $G$ is defined as $t(G)=\min\{\frac{|S|}{c(G-S)}\}$,
where the minimum is taken over all proper subset $S\subset V(G)$ such that $c(G-S)>1$.
A graph $G$ is {\bf $t$-tough} if $t(G)\ge t$. This parameter was introduced by Chv\'atal \cite{Chva73} in 1973 and is closely related to many graph properties, including Hamiltonicity, pancyclicity
and spanning trees, see \cite{BaBS06}.  By definitions of toughness and generalized connectivity, for a noncomplete connected graph $G$, we have $t(G)=\min_{2\le l\le \alpha} \{\frac{\kappa_l(G)}{l}\}$ where $\alpha$ is the independence number of $G$ (see also \cite{DaOS99}). 

The relationship between the toughness of a regular graph and eigenvalues has been considered by many researchers, among which Alon \cite{Alon95} is the first one. 
\begin{theorem}[Alon \cite{Alon95}]
For any connected $d$-regular graph $G$, $t(G)>\frac{1}{3}(\frac{d^2}{d\lambda+\lambda^2}-1)$.
\end{theorem}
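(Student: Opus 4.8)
The plan is to fix a vertex set $S$ realizing the toughness, so that $c:=c(G-S)\ge 2$ and $t(G)=|S|/c$, and to bound $|S|$ from below in terms of $c$. Write $s=|S|$, let $C_1,\dots,C_c$ be the components of $G-S$ with $a_i=|C_i|$, and set $m=n-s=\sum_i a_i$. The defining structural feature is that there are no edges between distinct $C_i$'s, so every edge leaving a component lands in $S$; in particular a transversal of the components is an independent set of size $c$. The whole argument is a quantitative version of this observation, combining eigenvalue interlacing with the expander mixing lemma.

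First I would extract the main term. Form the (symmetric, normalized) quotient matrix $\tilde B$ of the partition $\{S,C_1,\dots,C_c\}$; its eigenvalues interlace those of the adjacency matrix, so its least eigenvalue is at least $\lambda_n(G)\ge-\lambda$, i.e.\ $\tilde B+\lambda I\succeq 0$. Testing this positive-semidefinite form against the vector that is constant on each component block and equal to $-w$ on the $S$-block, and minimizing the resulting quadratic in $w$, yields an inequality of the shape
\[
\frac{s}{c}\ \ge\ \frac{\rho^2}{(\beta+\lambda)\,(d+\lambda-\rho)},
\]
where $\beta\le d$ is the average internal degree in $S$ and $\rho$ is the average number of $S$-neighbours of a component vertex (here I use $a_i\ge 1$ to replace $\sum_i r_i\sqrt{a_i}$ by $\sum_i r_i$). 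When every component is a single vertex one has $\rho=d$, and this already gives $s/c\ge \frac{d^2}{d\lambda+\lambda^2}$, the main term of the claimed bound; conceptually this is the Hoffman ratio bound applied to the transversal independent set, which is tight for these extremal configurations.

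The hard part is that $\rho$ degrades when components are large and internally dense, so the clean estimate above is strong only for small components. To control this I would invoke the mixing lemma twice: on each $C_i$ it gives $e(C_i,S)\ge (d-\lambda)\frac{a_i(n-a_i)}{n}$, while on $S$ it gives $e(S,V\setminus S)\le (d+\lambda)\frac{s(n-s)}{n}$; comparing these controls the total size $m$ and, since the components are disjoint, at most $m/\tau$ of them can have size $\ge\tau$. I would therefore split the components at a threshold $\tau$, apply the interlacing bound to the small components and the size-count to the large ones, and optimize over $\tau$ so that the large-component correction never swamps the independent-set main term. Carrying out this balancing is where the weaker constants appear: it is the source of the factor $\tfrac13$ and the additive $-1$, producing the stated $t(G)>\tfrac13\big(\tfrac{d^2}{d\lambda+\lambda^2}-1\big)$. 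I expect the main obstacle to be exactly this joint optimization over the component size profile, rather than either eigenvalue estimate taken in isolation.
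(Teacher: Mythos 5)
This statement is quoted from Alon's paper and is not proved anywhere in the present paper, so there is no in-paper proof to compare against; I am therefore judging your sketch on its own terms. Your first step is correct and checkable: for the partition $\{S,C_1,\dots,C_c\}$, writing $r_i=e(C_i,S)/a_i$ and $\rho=\frac1c\sum_i r_i$, quotient-matrix interlacing with the test vector $(-w,1,\dots,1)$ does yield $\frac{s}{c}\ge\frac{\rho^2}{(\beta+\lambda)(d+\lambda-\rho)}$, and this recovers the main term $\frac{d^2}{d\lambda+\lambda^2}$ when $\rho=d$. The genuine gap is in the second half, which is where the whole proof lives and which you do not carry out. The function $\rho\mapsto\frac{\rho^2}{(d+\lambda)(d+\lambda-\rho)}$ returns the main term only when $\rho\ge d-O(\lambda)$, and a component of size $a_i$ only guarantees $r_i\ge d-a_i+1$, so your threshold $\tau$ must be $O(\lambda)$ for the interlacing bound on the small components to be useful. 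But then the ``large'' components are merely those of size exceeding $O(\lambda)$: the count ``at most $m/\tau$ of them,'' combined with $e(S,V\setminus S)\le ds$ and the mixing lemma, charges each such component only about $\tau\cdot\frac{d-\lambda}{d}=O(\lambda)$ vertices of $S$, whereas the theorem requires a charge of order $\frac{d^2}{3\lambda(d+\lambda)}\approx\frac{d}{3\lambda}$ per component. These two demands on $\tau$ pull in opposite directions, and nothing in the sketch shows they can be reconciled; the sentence asserting that ``carrying out this balancing is the source of the factor $\tfrac13$ and the additive $-1$'' is precisely the claim that needs proof. A secondary issue: the lower bound $e(C_i,S)\ge(d-\lambda)a_i(n-a_i)/n$ degenerates for a component containing most of $V\setminus S$, which must be handled separately.

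For orientation, Alon's original argument is not an interlacing argument: it is a direct count of edges entering $S$, using $e(U,V\setminus U)\ge(d-\lambda)|U|(n-|U|)/n$ for suitable unions $U$ of components, together with a case analysis on component sizes relative to the Hoffman ratio $\lambda n/(d+\lambda)$; the factor $\tfrac13$ arises from a pigeonhole over the resulting classes of components rather than from a continuous optimization over a threshold. To complete your route you would need to make the class decomposition explicit and prove, for each class, a bound of the form $s\ge x\cdot(\text{number of components in that class})$ with $x=\frac{d^2}{d\lambda+\lambda^2}-1$; as written, the proposal does not establish the stated inequality.
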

Around the same time, Brouwer \cite{Brou95} independently discovered a slightly better bound of $t(G)$. 
\begin{theorem}[Brouwer \cite{Brou95}]
For any connected $d$-regular graph $G$, $t(G)>\frac{d}{\lambda}-2$.
\end{theorem}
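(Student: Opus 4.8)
The plan is to establish the inequality for every vertex cut and then take the minimum. Fix $S\subset V(G)$ with $c:=c(G-S)\ge 2$, write $C_1,\dots,C_c$ for the components of $G-S$, $n_i=|C_i|$, $s=|S|$, and set $b_i=e(C_i,S)/n_i$, the average number of edges a vertex of $C_i$ sends into $S$ (so $d-b_i$ is the average internal degree of $C_i$). Since there is no edge between distinct components, every edge leaving a component lands in $S$, which gives the global identity $\sum_{i=1}^c e(C_i,S)=e(S,V\setminus S)\le ds$. It suffices to show $s>c(\frac d\lambda-2)$, since this is exactly $t(G)>\frac d\lambda-2$ once $S$ is chosen to realise the toughness; the statement is vacuous when $d\le 2\lambda$, so I may assume $d>2\lambda$.

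The spectral input comes from Haemers' eigenvalue interlacing applied to the partition $\{S,C_1,\dots,C_c\}$. Because distinct components are non-adjacent, the symmetrised quotient matrix $\hat B$ is an arrowhead matrix of order $c+1$: its diagonal is $(\sigma,d-b_1,\dots,d-b_c)$ with $\sigma=2e(S)/s$, and its only off-diagonal entries sit in the row and column indexed by $S$. Interlacing yields $\mu_1=d$, $\mu_2\le\lambda_2(G)\le\lambda$, and $\mu_{c+1}\ge\lambda_n(G)\ge-\lambda$, where $\mu_1\ge\cdots\ge\mu_{c+1}$ are the eigenvalues of $\hat B$. The first consequence I extract is a pairwise bound: for $i\ne j$, the vector supported on the $C_i$- and $C_j$-coordinates and orthogonal to the Perron eigenvector of $\hat B$ has Rayleigh quotient $\frac{(d-b_i)n_j+(d-b_j)n_i}{n_i+n_j}$, which is at most $\mu_2\le\lambda$. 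Rearranging gives $b_in_j+b_jn_i\ge(d-\lambda)(n_i+n_j)$ for all $i\ne j$; in particular at most one component, say $C_1$, can satisfy $b_1<d-\lambda$, and $e(C_i,S)\ge(d-\lambda)n_i$ for every $i\ge 2$.

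Combining this with $\sum_i e(C_i,S)\le ds$ controls the sparse components: $(d-\lambda)\sum_{i\ge 2}n_i\le ds$. To turn this into a bound on $s$ that is linear in the number of components $c$ (which is what toughness demands), I plan to count the components directly. Selecting one vertex from each $C_i$ produces an independent set of size $c$, so the Hoffman ratio bound gives $c\le\frac{-\lambda_n}{d-\lambda_n}\,n\le\frac{\lambda}{d}\,n$, that is $n\ge\frac{cd}{\lambda}$. The remaining task is to play the isoperimetric inequality $(d-\lambda)\sum_{i\ge 2}n_i\le ds$ (which forces $S$ to be large relative to the sparse part) against the decomposition $n=s+n_1+\sum_{i\ge 2}n_i\ge \frac{cd}{\lambda}$, while bounding the single exceptional component $C_1$ by means of the smallest-eigenvalue interlacing $\mu_{c+1}\ge-\lambda$ (equivalently, by the expander mixing lemma applied to $C_1$ against the other components, which forces them to be small if $C_1$ is large). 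This is precisely what prevents $C_1$ from being both large and cheap to separate.

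I expect the main obstacle to be this final amalgamation, and in particular squeezing out the exact additive constant $2$. The two extreme regimes --- many tiny components (governed by the independent-set/ratio bound) and one dominant dense component (governed by $\mu_{c+1}\ge-\lambda$) --- pull in opposite directions, and the bookkeeping that interpolates between them, together with the optimisation over the sizes $n_i$ and over $\sigma$, is where the constant is won or lost. A cruder execution, for instance collapsing the components into a balanced bipartition and invoking the expander mixing lemma, only produces a bound of the shape $\frac{d}{2\lambda}-O(1)$; keeping the full arrowhead structure of $\hat B$ rather than reducing to two parts is therefore essential to reach the sharp coefficient $\frac d\lambda$.
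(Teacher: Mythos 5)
First, note that the paper does not actually prove this statement: it is quoted as a known theorem of Brouwer \cite{Brou95}, so there is no internal proof to compare against. Evaluating your attempt on its own terms, the preliminary reductions and the intermediate spectral facts are all correct --- the reduction to showing $|S| > c(G-S)\bigl(\tfrac{d}{\lambda}-2\bigr)$ for every cutset $S$, the arrowhead structure of the symmetrised quotient matrix of the partition $\{S,C_1,\dots,C_c\}$, the pairwise Rayleigh-quotient computation giving $b_i n_j + b_j n_i \ge (d-\lambda)(n_i+n_j)$ and hence that at most one component can have $e(C_i,S)<(d-\lambda)n_i$, and the Hoffman bound $c\le \tfrac{\lambda}{d}n$ obtained by choosing one vertex per component.

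However, the proof is not complete, and the missing piece is precisely the substantive content of the theorem. You stop at the point where the three ingredients --- the isoperimetric control $(d-\lambda)\sum_{i\ge 2}n_i\le ds$, the count $n\ge cd/\lambda$, and some unspecified use of $\mu_{c+1}\ge-\lambda$ to handle the exceptional component $C_1$ --- must be combined into $s>c\bigl(\tfrac{d}{\lambda}-2\bigr)$, and you explicitly defer this ``final amalgamation.'' This is a genuine gap rather than a routine verification: in the regime of one dominant component, the decomposition $n=s+n_1+\sum_{i\ge 2}n_i$ gives nothing because $n_1$ can absorb essentially all of $n$, while in the regime of many singleton components the isoperimetric bound alone gives only $s\ge c$, which is far weaker than $c\bigl(\tfrac{d}{\lambda}-2\bigr)$ whenever $\lambda=o(d)$. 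Each extreme is rescued by a different one of your two tools, and the interpolation between them --- where the additive constant $2$ is actually earned or lost --- is exactly what you have not written down. Until that step is supplied as an explicit inequality chain rather than a description of what it should accomplish, the argument does not establish the theorem.
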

Brouwer~\cite{Brou96} conjectured that the lower bound of the previous theorem can be improved
to $\frac{d}{\lambda}-1$ for any connected $d$-regular graph $G$. For the special case of toughness $1$, Liu and Chen \cite{LiCh10} improved Brouwer's previous result.

\begin{theorem}[Liu and Chen \cite{LiCh10}]
For any connected $d$-regular graph $G$, if 
\[
\lambda_2(G)
< \left\{ \begin{array}{ll}
        d -1 + \frac{3}{d+1}, & \textrm{if $d$ is even},\\
        d -1 +\frac{2}{d+1}, & \textrm{if $d$ is odd},
        \end{array} \right.
\]
then $t(G)\ge 1$.
\end{theorem}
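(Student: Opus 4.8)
The plan is to prove the contrapositive: assuming $t(G)<1$, I will produce two ``nearly $d$-regular'' induced subgraphs of $G$ whose spectral radii push $\lambda_2(G)$ up to the stated threshold $\tau$ (where $\tau=\frac{d^2+2}{d+1}=d-1+\frac{3}{d+1}$ for even $d$ and $\tau=\frac{d^2+1}{d+1}=d-1+\frac{2}{d+1}$ for odd $d$). Since $t(G)<1$, there is a set $S\subseteq V(G)$ with $s:=|S|\ge 1$ such that $G-S$ has $m:=c(G-S)\ge s+1$ components $C_1,\dots,C_m$. I write $n_i=|C_i|$ and let $e_i$ be the number of edges between $C_i$ and $S$. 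Connectivity of $G$ forces $e_i\ge 1$, and $d$-regularity gives the edge budget $\sum_i e_i=e(S,V\setminus S)\le ds$.

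The first step is an interlacing reduction. For any two distinct components $C_i,C_j$, the Perron vectors of $G[C_i]$ and $G[C_j]$, extended by zero to $V(G)$, have disjoint supports, and $G$ has no edges between $C_i$ and $C_j$; hence they span a two-dimensional subspace on which the Rayleigh quotient of the adjacency matrix is at least $\min\{\lambda_1(G[C_i]),\lambda_1(G[C_j])\}$. By Courant--Fischer this makes $\lambda_2(G)$ at least the \emph{second largest} of the numbers $\lambda_1(G[C_1]),\dots,\lambda_1(G[C_m])$. Thus it suffices to show that at least two components satisfy $\lambda_1(G[C_i])\ge\tau$; call a component \emph{deficient} if $\lambda_1(G[C_i])<\tau$, so I must prove that at most one component is deficient.

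The second step bounds each $\lambda_1(G[C_i])$ from below by the average degree,
\[
\lambda_1(G[C_i])\ \ge\ \frac{2|E(G[C_i])|}{n_i}\ =\ d-\frac{e_i}{n_i},
\]
and uses $\lambda_1(G[C_i])\le n_i-1$ for components with $n_i\le d$. A short computation shows that a deficient component must have $e_i\ge d-1$ when $d$ is even and $e_i\ge d$ when $d$ is odd -- this is exactly where the two threshold values are calibrated. The decisive extra input is a parity observation: the internal degree sum $dn_i-e_i=2|E(G[C_i])|$ is even. For even $d$ this forces every $e_i$ to be even, so a deficient component in fact has $e_i\ge d$ and \emph{every} component has $e_i\ge 2$; for odd $d$ one instead gets $e_i\equiv n_i\pmod 2$, and the threshold already yields $e_i\ge d$ for deficient components. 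Feeding these bounds into $\sum_i e_i\le ds$ with $m\ge s+1$, and writing $b$ for the number of deficient components, I obtain $b(d-2)\le s(d-2)-2$ in the even case and $b(d-1)\le s(d-1)-1$ in the odd case; either way $b\le s-1$, so $m-b\ge 2$. Hence two components have $\lambda_1\ge\tau$, giving $\lambda_2(G)\ge\tau$ and contradicting the hypothesis.

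The main obstacle is precisely the passage from ``one good component'' to ``two good components.'' The crude average-degree count alone only guarantees a single component with $\lambda_1\ge\tau$ (the edge budget is too loose when $s\ge 2$), and it is the parity observation -- forcing deficient components to cost $e_i\ge d$ edges into $S$ while ordinary components already cost $1$ (or $2$, when $d$ is even) -- that closes the gap and simultaneously explains the asymmetric even/odd thresholds. I expect the fussiest bookkeeping to be the treatment of the small components with $n_i\le d$, which are automatically deficient but expensive, each requiring $e_i\ge d$ edges to $S$ by a feasibility bound; these must be folded into the count cleanly so that the final inequality on $b$ holds uniformly.
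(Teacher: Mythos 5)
Your proof is correct, and it is worth noting that the paper itself offers no proof of this statement: it is quoted from Liu and Chen as motivation for the strictly stronger Theorem~\ref{cwthm15} and the paper's own Theorem~\ref{onetough}, whose threshold dominates the one here. Your argument reproduces the skeleton the paper uses for Theorems~\ref{onetough} and~\ref{L-conn} --- contradiction via a set $S$ with $|S|<c(G-S)$, the edge budget $\sum_i e_i\le d|S|$, the parity of $e_i=dn_i-2|E(G[C_i])|$, the feasibility bound $e_i\ge n_i(d+1-n_i)\ge d$ for components with $n_i\le d$, and interlacing applied to a union of components as in Corollary~\ref{lamp} --- but it swaps out the key spectral ingredient. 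Where the paper invokes Lemma~\ref{cwlem}, the sharp structural bound $\lambda_1(H)\ge\theta(d)$ for irregular graphs $H\in\mathcal{X}(d)$, you use only the average-degree bound $\lambda_1(G[C_i])\ge d-e_i/n_i$, and this is precisely why your computation lands on the weaker Liu--Chen threshold $d-1+\frac{3}{d+1}$ (resp.\ $d-1+\frac{2}{d+1}$) rather than $\theta(d)$. Your bookkeeping is also packaged differently but correctly: instead of directly showing that at least two components send fewer than $d$ edges to $S$, you bound the number $b$ of deficient components via $\sum_i e_i\ge bd+2(m-b)$ (even $d$) or $\sum_i e_i\ge bd+(m-b)$ (odd $d$) together with $m\ge s+1$, obtaining $b\le s-1$ and hence two components with $\lambda_1\ge\tau$, so $\lambda_2(G)\ge\tau$. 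All the estimates, including the parity upgrade from $e_i\ge d-1$ to $e_i\ge d$ when $d$ is even and the treatment of components with $n_i\le d$, check out.
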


Recently, Cioab\u{a} and Wong \cite{CiWo14} further improved the above result.
\begin{theorem}[Cioab\u{a} and Wong \cite{CiWo14}]
\label{cwthm15}
For any connected $d$-regular graph $G$, if 
\[
\lambda_2(G)
< \left\{ \begin{array}{ll}
        \frac{d-2 + \sqrt{d^2 +12}}{2}, & \textrm{if $d$ is even},\\
        \frac{d-2 + \sqrt{d^2 +8}}{2}, & \textrm{if $d$ is odd},
        \end{array} \right.
\]
then $t(G)\ge 1$.
\end{theorem}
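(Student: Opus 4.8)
The plan is to obtain this result as a quick consequence of Theorem~\ref{L-conn}, using that a failure of $1$-toughness is precisely a failure of one of the inequalities $\kappa_l(G)\ge l$. First I would clear the trivial cases: if $G$ is complete then $t(G)=\infty\ge 1$, and if $d\le 2$ then $G$ is $K_2$ or a cycle, both of which are $1$-tough; so I may assume $d\ge 3$ and $G$ noncomplete. Let $b=b(d)$ denote the right-hand side of the displayed inequality, so the hypothesis reads $\lambda_2(G)<b$. Suppose toward a contradiction that $t(G)<1$. By the definition of toughness there is a set $S\subsetneq V(G)$ with $c(G-S)\ge 2$ and $|S|<c(G-S)$; write $s=|S|$ and $c=c(G-S)$, so $c\ge s+1$, and since $G$ is connected with $c\ge 2$ we also have $s\ge 1$.

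Next I would feed these data into Theorem~\ref{L-conn} with the choice $l=c$ and $k=s+1$, which satisfies $l\ge k\ge 2$, and verify its hypotheses. The order condition holds because each of the $c$ components contains at least one vertex, so $|V(G)|=s+\sum_i|C_i|\ge s+c=k+l-1$. For the spectral condition: if $\kappa'=d$ the first alternative of Theorem~\ref{L-conn} applies with nothing to check. If $\kappa'<d$, the relevant index is $\lceil (l-k+1)d/(d-\kappa')\rceil=\lceil (c-s)d/(d-\kappa')\rceil$, and the key observation is that it is at least $2$: since $c-s\ge 1$ and $d/(d-\kappa')>1$ (as $1\le\kappa'<d$), the quantity inside the ceiling exceeds $1$. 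Because the adjacency eigenvalues are nonincreasing, every $\lambda_j$ with $j\ge 2$ satisfies $\lambda_j\le\lambda_2<b$, so the eigenvalue occurring in Theorem~\ref{L-conn} is indeed below $b$. Thus in either case the hypotheses hold and we conclude $\kappa_l(G)\ge k=s+1$. But $S$ witnesses $c(G-S)=c\ge l$, so by definition $\kappa_l(G)\le|S|=s$, giving $s+1\le\kappa_l(G)\le s$, the desired contradiction. Hence $t(G)\ge 1$.

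The only bookkeeping requiring care is that the index $\lceil (c-s)d/(d-\kappa')\rceil$ be a legitimate eigenvalue index, i.e. lie in $\{2,\dots,n\}$. The lower bound $\ge 2$ is exactly what lets $\lambda_2<b$ be invoked, and is the heart of the reduction. For the upper bound I would note that the edge cut $(C_i,V\setminus C_i)$ has size at least the edge-connectivity, so each component sends at least $\kappa'$ edges to $S$, while $S$ emits at most $ds$ edges in total; hence $c\kappa'\le ds$, whence $(c-s)\kappa'\le s(d-\kappa')$ and therefore $(c-s)\frac{d}{d-\kappa'}=(c-s)+\frac{(c-s)\kappa'}{d-\kappa'}\le (c-s)+s=c\le n$. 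I expect this elementary range check, rather than any substantial idea, to be the only delicate point, since all the structural content is already packaged inside Theorem~\ref{L-conn}: that theorem ultimately rests on applying eigenvalue interlacing to the disjoint union of the components of $G-S$, with $b$ being the spectral-radius threshold below which two nearly $d$-regular induced components would force an eigenvalue above the bound.
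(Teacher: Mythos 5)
Your argument is correct, and all the bookkeeping checks out: with $l=c$, $k=s+1$ one has $l\ge k\ge 2$, $|V(G)|\ge s+c=k+l-1$, the index $\lceil (c-s)d/(d-\kappa')\rceil$ is at least $2$ (so the hypothesis $\lambda_2<\theta(d)$ suffices by monotonicity of the spectrum) and at most $c\le n$ (by the edge count $c\kappa'\le ds$), and the conclusion $\kappa_l(G)\ge s+1$ contradicts the witness $S$ with $c(G-S)\ge l$ and $|S|=s$. There is no circularity, since Theorem~\ref{L-conn} is proved independently. The route is, however, not the one the paper takes: the paper obtains Theorem~\ref{cwthm15} as an immediate corollary of Theorem~\ref{onetough}, via the one-line observation that $\lceil d/(d-\kappa')\rceil\ge 2$, so that $\lambda_{\lceil d/(d-\kappa')\rceil}\le\lambda_2<\theta(d)$ and no parameter choices or range checks are needed (and the $\kappa'=d$ case is absorbed into the same statement). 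Your detour through the generalized connectivity theorem buys nothing extra here but costs the $(l,k)$ bookkeeping and the verification that the eigenvalue index is a legitimate one; on the other hand, it makes transparent that a failure of $1$-toughness is exactly a failure of $\kappa_l\ge l$ for some $l$, which is the viewpoint the paper itself records via the identity $t(G)=\min_{2\le l\le\alpha}\{\kappa_l(G)/l\}$. At bottom both reductions run on the same engine: the count of edges leaving the components of $G-S$, Lemma~\ref{cwlem}, and the interlacing inequality of Corollary~\ref{lamp}.
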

Moreover, Cioab\u{a} and Wong \cite{CiWo14} showed the previous result is best possible by constructing $d$-regular graphs whose second largest eigenvalues equals the right hand-side of the previous theorem,  but having toughness less than $1$. An immediate corollary of the previous result is the following.

\begin{corollary}[Cioab\u{a} and Wong \cite{CiWo14}]
\label{cwcor23}
For any bipartite connected $d$-regular graph $G$, if 
\[
\lambda_2(G)
< \left\{ \begin{array}{ll}
        \frac{d-2 + \sqrt{d^2 +12}}{2}, & \textrm{if $d$ is even},\\
        \frac{d-2 + \sqrt{d^2 +8}}{2}, & \textrm{if $d$ is odd},
        \end{array} \right.
\]
then $t(G)=1$.
\end{corollary}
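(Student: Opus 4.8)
The plan is to combine the lower bound already supplied by Theorem~\ref{cwthm15} with an elementary upper bound coming from the bipartite structure of $G$. Since the hypothesis of this corollary is identical to the hypothesis of Theorem~\ref{cwthm15}, that theorem immediately gives $t(G)\ge 1$. It therefore remains only to establish $t(G)\le 1$, and this is where bipartiteness enters as the extra ingredient.

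To get the upper bound, I would fix a bipartition $V(G)=A\cup B$ of the connected $d$-regular bipartite graph $G$, and set $n=|V(G)|$. Counting the edges incident to each side gives $d|A|=|E(G)|=d|B|$, hence $|A|=|B|=\frac{n}{2}$. Moreover, each vertex of $A$ has its $d$ neighbours in $B$, so $|B|\ge d\ge 2$ (and symmetrically $|A|\ge d$), and both sides are nontrivial. I would then use one side of the bipartition as a witness for the toughness. Taking $S=A$, every edge of $G$ joins $A$ to $B$, so $G-A$ has vertex set $B$ and no edges; thus $G-A$ is a set of $|B|$ isolated vertices and $c(G-A)=|B|=\frac{n}{2}\ge 2>1$. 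Hence $S=A$ is admissible in the definition of toughness, and
\[
t(G)\le \frac{|S|}{c(G-S)}=\frac{|A|}{|B|}=1.
\]
Combining this with $t(G)\ge 1$ from Theorem~\ref{cwthm15} yields $t(G)=1$.

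There is essentially no analytic obstacle in this argument: the eigenvalue hypothesis is invoked only as a black box through Theorem~\ref{cwthm15}, and the reverse inequality $t(G)\le 1$ is purely structural. The only point demanding a moment of care is verifying that $S=A$ is a legitimate witness for toughness, i.e.\ that $c(G-A)>1$; as noted above, this holds because a connected $d$-regular bipartite graph with $d\ge 2$ has each side of size at least $d$, so deleting one side leaves at least two isolated vertices. Accordingly, the whole corollary reduces to the single observation that in a balanced bipartite graph one color class is a tough set achieving the ratio exactly $1$.
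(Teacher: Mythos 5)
Your proposal is correct and matches the paper's approach: the paper presents this as an immediate consequence of Theorem~\ref{cwthm15} (giving $t(G)\ge 1$) together with the observation, recorded later as Lemma~\ref{biparlem}, that deleting one side of the bipartition of a regular bipartite graph leaves $|S|$ isolated vertices and hence $t(G)\le 1$. Your extra verification that the two sides are equal in size and that $c(G-A)>1$ is the same computation, just spelled out in more detail.
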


These authors also found the second largest eigenvalue condition for $t(G)\ge \tau$, where $\tau\le \kappa'/d$ is a positive number. 

\begin{theorem}[Cioab\u{a} and Wong \cite{CiWo14}]
\label{cwthm14}
Let $G$ be a connected $d$-regular graph with edge connectivity $\kappa'$ and $d\ge 3$.
Suppose that $\tau$ is a positive number with $\tau\le \kappa'/d$. 
If $\lambda_2(G)< d-\frac{\tau d}{d+1}$, then $t(G)\ge \tau$.
\end{theorem}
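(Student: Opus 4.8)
The plan is to prove the statement by contradiction through a direct edge-counting argument, treating the eigenvalue hypothesis as auxiliary in the stated range $\tau\le \kappa'/d$. First I would suppose that $t(G)<\tau$. Since a toughness value below $\tau$ presupposes the existence of a separating set, $G$ is noncomplete and there is a proper subset $S\subset V(G)$ with $c:=c(G-S)\ge 2$ and $|S|<\tau c$; write $C_1,\dots,C_c$ for the components of $G-S$. The goal is then to bound the number of edges leaving $S$ in two opposite ways and reach a numerical contradiction with $\tau\le\kappa'/d$.

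The first key step is a lower bound on the edge boundary of each component. Because $C_i$ is a full component of $G-S$, every edge of $G$ leaving $C_i$ must end in $S$, so the number $e(C_i,S)$ of edges between $C_i$ and $S$ is exactly the size of the edge cut separating the nonempty set $C_i$ from the nonempty set $V(G)\setminus C_i$; by the definition of edge connectivity, $e(C_i,S)\ge \kappa'$ for every $i$, and summing gives $\sum_{i=1}^{c} e(C_i,S)\ge c\kappa'$. The second step bounds the same sum from above using $d$-regularity: $\sum_i e(C_i,S)$ is precisely the number of edges between $S$ and $V(G)\setminus S$, which is at most the total degree $d|S|$ of $S$. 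Combining the two estimates yields $c\kappa'\le d|S|<d\tau c$, hence $\kappa'<\tau d$, i.e. $\tau>\kappa'/d$, contradicting the hypothesis $\tau\le\kappa'/d$. Therefore $t(G)\ge\tau$. The same counting in fact shows $\kappa_l(G)\ge l\kappa'/d$ for every $l$, so that through the identity $t(G)=\min_{2\le l\le\alpha}\kappa_l(G)/l$ one recovers $t(G)\ge\kappa'/d\ge\tau$ directly.

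I do not expect a genuine obstacle within the range $\tau\le\kappa'/d$: there the conclusion is forced by the interplay of edge connectivity and regularity alone, and the eigenvalue hypothesis is not strictly required to run the counting above. The real point of the condition $\lambda_2(G)<d-\frac{\tau d}{d+1}$ is to push beyond this range, and the hard part is exactly there. Eigenvalue interlacing applied to the quotient matrix of a partition $\{A,B,S\}$, with $A$ and $B$ two roughly balanced unions of the components $C_i$, produces a lower bound of the form $\lambda_2(G)\ge d-\frac{e(A,S)}{|A|}$, and the threshold $\frac{\tau d}{d+1}$ reflects the extremal configuration in which each component is essentially a clique on $d+1$ vertices sending about $\tau d$ edges to $S$. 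The delicate step in any such purely spectral refinement is controlling components of size at most $d$: for these the boundary-to-size ratio $\frac{e(C_i,S)}{|C_i|}$ is large, so the interlacing lower bound on $\lambda_2(G)$ degrades and must be supplemented by the combinatorial estimate above, which is why the edge-counting argument is the cleaner route for the inequality as stated.
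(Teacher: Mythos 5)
Your proof is correct and follows essentially the same route as the paper: the authors themselves observe that the eigenvalue hypothesis in this theorem is superfluous, and their Theorem \ref{tautough} establishes $t(G)\ge \kappa'/d\ge\tau$ by exactly your counting argument, namely $c\kappa'\le\sum_i e(C_i,S)\le d|S|$. Your closing remarks about when the spectral condition would actually be needed are a reasonable aside but play no role in the proof itself.
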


In this paper, we continue to investigate the relationship between toughness of a regular
graph and its eigenvalues. The following theorems are the main results.
As $\lceil\frac{d}{d-\kappa'}\rceil\ge 2$, Theorem \ref{onetough} is an improvement of 
Theorem \ref{cwthm15}. For bipartite regular graphs, Theorem~\ref{bipar} improves 
Corollary~\ref{cwcor23}. We shall also mention that in Theorem \ref{cwthm14} the eigenvalue
condition is not needed, see Theorem \ref{tautough}. As an application of Theorem \ref{tautough}, 
Corollary \ref{brouconj} confirms a conjecture of Brouwer \cite{Brou96} when $\kappa'<d$.

\begin{theorem}\label{onetough}
Let $G$ be a connected $d$-regular graph with $d\ge 3$ and edge connectivity $\kappa'$.
If $\kappa' =d$, or, if $\kappa'< d$ and
\[
\lambda_{\lceil\frac{d}{d-\kappa'}\rceil} (G)
< \left\{ \begin{array}{ll}
        \frac{d-2 + \sqrt{d^2 +12}}{2}, & \textrm{if $d$ is even},\\
        \frac{d-2 + \sqrt{d^2 +8}}{2}, & \textrm{if $d$ is odd},
        \end{array} \right.
\]
then $t(G)\ge 1$.
\end{theorem}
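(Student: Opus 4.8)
The plan is to obtain Theorem~\ref{onetough} as the special case $k=l$ of Theorem~\ref{L-conn}, combined with the toughness identity recorded just before the theorem. First I would dispose of the trivial case: if $G$ is complete, then no proper $S\subset V(G)$ produces $c(G-S)>1$, so $t(G)=\infty\ge 1$; hence I may assume $G$ is noncomplete, so that $t(G)=\min_{2\le l\le\alpha}\frac{\kappa_l(G)}{l}$ with $\alpha=\alpha(G)\ge 2$. It then suffices to show $\kappa_l(G)\ge l$ for every integer $l$ with $2\le l\le\alpha$, since this makes every term of the minimum at least $1$.

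For a fixed such $l$ I would invoke Theorem~\ref{L-conn} with $k=l$. The constraint $l\ge k\ge 2$ reduces to $l\ge 2$, and the eigenvalue index collapses to $\lceil\frac{(l-l+1)d}{d-\kappa'}\rceil=\lceil\frac{d}{d-\kappa'}\rceil$, which is exactly the index appearing in Theorem~\ref{onetough}. Thus the eigenvalue hypothesis here is precisely the one required by Theorem~\ref{L-conn}; note that, since this index does not depend on $l$, the single spectral assumption works uniformly for all $l$ at once. The clause $\kappa'=d$ is handled directly by the matching clause of Theorem~\ref{L-conn}.

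The only hypothesis of Theorem~\ref{L-conn} that is not immediately visible is the order condition $|V(G)|\ge k+l-1=2l-1$. The key point I would verify is that this never fails in the relevant range, because $\alpha\le n/2$, where $n=|V(G)|$. This follows from an elementary edge count: taking a maximum independent set $I$ with $|I|=\alpha$, all $d\alpha$ edges incident to $I$ must terminate in $V(G)\setminus I$, which can receive at most $d(n-\alpha)$ endpoints, so $d\alpha\le d(n-\alpha)$ and hence $\alpha\le n/2$. Consequently $l\le\alpha\le n/2$ gives $n\ge 2l\ge 2l-1$, so the order condition holds automatically.

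With both the order and the eigenvalue hypotheses in force, Theorem~\ref{L-conn} yields $\kappa_l(G)\ge l$ for every $l$ in $2\le l\le\alpha$, and therefore $t(G)\ge 1$. I do not expect a real obstacle here: essentially all of the combinatorial and spectral work is already packaged in Theorem~\ref{L-conn}, and the only genuinely new step is the bookkeeping that confirms the order condition via the bound $\alpha\le n/2$.
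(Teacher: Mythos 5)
Your proposal is correct, but it takes a different route from the paper. The paper proves Theorem~\ref{onetough} directly: assuming $t(G)<1$, it picks $S$ with $|S|<c(G-S)$, counts the edges $t_i$ between each component $H_i$ and $S$, shows by an averaging argument that at least $\lceil\frac{d}{d-\kappa'}\rceil$ components satisfy $t_i<d$, deduces that each such component lies in $\mathcal{X}(d)$, and concludes via Corollary~\ref{lamp} and Lemma~\ref{cwlem} --- i.e., it repeats, with the parameters of the toughness setting, essentially the same machinery used to prove Theorem~\ref{L-conn}. You instead obtain the theorem as the $k=l$ specialization of Theorem~\ref{L-conn}, combined with the identity $t(G)=\min_{2\le l\le\alpha}\kappa_l(G)/l$ stated in the introduction (and only the easy inequality $t(G)\ge\min_l\kappa_l(G)/l$ is actually needed). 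The two nontrivial points in your reduction both check out: the eigenvalue index $\lceil\frac{(l-k+1)d}{d-\kappa'}\rceil$ collapses to $\lceil\frac{d}{d-\kappa'}\rceil$ independently of $l$, so one spectral hypothesis covers all $l$ simultaneously; and the order hypothesis $|V(G)|\ge 2l-1$ follows from $l\le\alpha\le n/2$, where $\alpha\le n/2$ is the correct regularity-based edge count (all $d\alpha$ edges leaving a maximum independent set land in the $n-\alpha$ remaining vertices, each of degree $d$). Your handling of the complete case and of the clause $\kappa'=d$ is also fine. What your approach buys is economy and a cleaner logical picture --- Theorem~\ref{onetough} is exposed as a genuine corollary of Theorem~\ref{L-conn} rather than a parallel theorem; what the paper's direct proof buys is self-containedness and symmetry with the proof of Theorem~\ref{bipar}, which reuses the same component-counting setup.
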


\begin{theorem}
\label{bipar}
For any bipartite connected $d$-regular graph $G$ with $\kappa' < d$, if 
$\lambda_{\lceil\frac{d}{d-\kappa'}\rceil} (G) < d-\frac{d-1}{2d}$, then $t(G)=1$.
\end{theorem}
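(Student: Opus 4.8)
The plan is to establish the two inequalities $t(G)\le 1$ and $t(G)\ge 1$ separately, their combination giving $t(G)=1$. The upper bound is immediate from bipartiteness: if $X,Y$ are the two color classes, then $d$-regularity forces $|X|=|Y|=n/2$, and since $Y$ is an independent set, deleting $X$ leaves $|Y|=n/2\ge d\ge 2$ isolated vertices, so $t(G)\le \frac{|X|}{c(G-X)}=1$. For the lower bound I would argue by contradiction: assuming $t(G)<1$, there is a set $S$ with $c(G-S)\ge |S|+1$, and the goal becomes to produce enough vertex-disjoint induced subgraphs of large spectral radius to force $\lambda_{\lceil d/(d-\kappa')\rceil}(G)\ge d-\frac{d-1}{2d}$, contradicting the hypothesis. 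The transfer from disjoint subgraphs to an eigenvalue of $G$ uses the standard interlacing fact that if $H_1,\dots,H_r$ are vertex-disjoint induced subgraphs then $\lambda_r(G)\ge \min_j \lambda_1(H_j)$, applied to their disjoint union viewed as a principal submatrix of $A(G)$.

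The counting step is the heart of the reduction and runs exactly as in Theorem~\ref{onetough}. Write $s=|S|$, let $C_1,\dots,C_m$ (with $m\ge s+1$) be the components of $G-S$, and let $e_i$ be the number of edges between $C_i$ and $S$. Edge-connectivity gives $e_i\ge \kappa'$ for every $i$, since these edges form an edge cut separating $C_i$ from the rest of $G$, while counting the edges leaving $S$ gives $\sum_i e_i\le ds$. Call $C_i$ \emph{good} if $e_i\le d-1$, and note the remaining components satisfy $e_i\ge d$. If $g$ is the number of good components, then
\[
ds\ge \sum_i e_i\ge g\kappa'+(m-g)d = md-g(d-\kappa'),
\]
so $g(d-\kappa')\ge d(m-s)\ge d$ and hence $g\ge \lceil d/(d-\kappa')\rceil$. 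Thus there are at least $\lceil d/(d-\kappa')\rceil$ good, pairwise vertex-disjoint components, and it remains only to bound the spectral radius of each from below.

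The main obstacle is the bipartite spectral estimate for a good component, which is precisely where the improved threshold $d-\frac{d-1}{2d}$ (rather than the weaker bound of Theorem~\ref{onetough}) comes from. Let $C$ be a good component with color classes of sizes $p\le q$. Since $G$ is $d$-regular, the total degree deficiency inside $C$ equals $e\le d-1$, that is $d(p+q)-2|E(C)|\le d-1$; combining this with $|E(C)|\le pq$ yields $2pq\ge d(p+q)-(d-1)$, which I expect to be the delicate point. An extremal analysis of this inequality (its left side is at most $(p+q)^2/2$, while the quadratic $x^2-2dx+2(d-1)$ is negative throughout $1\le x\le 2d-1$) forces $p+q\ge 2d$: intuitively, fewer than $d$ boundary edges leave almost every vertex at full degree $d$, so each color class must contain at least $d$ vertices. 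With $|V(C)|=p+q\ge 2d$ in hand, the Rayleigh quotient at the all-ones vector gives
\[
\lambda_1(G[C])\ge \frac{2|E(C)|}{p+q} = d-\frac{e}{p+q} \ge d-\frac{d-1}{2d}.
\]
Feeding these $\lceil d/(d-\kappa')\rceil$ bounds into the interlacing inequality yields $\lambda_{\lceil d/(d-\kappa')\rceil}(G)\ge d-\frac{d-1}{2d}$, contradicting the hypothesis and establishing $t(G)\ge 1$, whence $t(G)=1$.
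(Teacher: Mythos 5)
Your proposal is correct and follows essentially the same route as the paper: the upper bound $t(G)\le 1$ from deleting one color class, the counting argument from Theorem~\ref{onetough} to produce $\lceil d/(d-\kappa')\rceil$ components with fewer than $d$ boundary edges, the bipartite edge bound $|E(C)|\le pq\le (p+q)^2/4$ forcing $|V(C)|\ge 2d$, and the average-degree lower bound on $\lambda_1$ combined with interlacing. The only difference is cosmetic: you count the good components directly rather than by contradiction.
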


\begin{theorem}
\label{tautough}
Let $G$ be a connected $d$-regular graph with edge connectivity $\kappa'$.
Then $t(G)\ge \kappa'/d$.
\end{theorem}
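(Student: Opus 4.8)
The plan is to show directly from the definition that every vertex cut witnesses the bound $t(G)\ge \kappa'/d$. Take an arbitrary proper subset $S\subset V(G)$ with $c:=c(G-S)\ge 2$, and let $C_1,\dots,C_c$ be the components of $G-S$. The key observation I would exploit is that there are no edges between distinct components of $G-S$, so every edge leaving $C_i$ must have its other endpoint in $S$. In other words, the edge cut $[C_i,V(G)\setminus C_i]$ coincides exactly with the set of edges between $C_i$ and $S$.

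Next I would invoke edge-connectivity componentwise. Since $c\ge 2$, each $C_i$ is a nonempty proper subset of $V(G)$, so the cut $[C_i,V(G)\setminus C_i]$ contains at least $\kappa'$ edges. Combined with the previous observation, this gives $e(C_i,S)\ge \kappa'$ for every $i$. Summing over all components, and using again that every edge from $S$ to $V(G)\setminus S$ lands in exactly one component, I obtain
\[
e(S,V(G)\setminus S)=\sum_{i=1}^{c} e(C_i,S)\ge c\kappa'.
\]

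For the matching upper bound I would use $d$-regularity: each of the $|S|$ vertices in $S$ has degree $d$, and every edge counted in $e(S,V(G)\setminus S)$ is incident with a distinct endpoint in $S$, so $e(S,V(G)\setminus S)\le d|S|$ (the slack being twice the number of edges inside $S$). Chaining the two inequalities yields $c\kappa'\le d|S|$, i.e. $|S|/c(G-S)\ge \kappa'/d$. Since $S$ was an arbitrary separating set, taking the minimum over all such $S$ gives $t(G)\ge \kappa'/d$, as desired.

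I do not expect a serious obstacle here; the argument is a short double-counting of the edges across $S$. The one point that must be stated carefully is the identification of each component's edge boundary with its edges into $S$, which is what converts the global edge-connectivity hypothesis into a per-component lower bound of $\kappa'$ and drives the entire estimate.
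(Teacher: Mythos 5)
Your proposal is correct and is essentially identical to the paper's own proof: both lower-bound the edges between each component and $S$ by $\kappa'$, upper-bound the total by $d|S|$ via regularity, and conclude $|S|/c(G-S)\ge \kappa'/d$. Your explicit remark that each component's edge boundary lies entirely in $S$ is a point the paper leaves implicit, but the argument is the same.
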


\begin{corollary}
\label{brouconj}
For any connected $d$-regular graph $G$ with $d\ge 3$ and  edge connectivity $\kappa'<d$, 
$t(G)>\frac{d}{\lambda_2}-1\ge \frac{d}{\lambda}-1$.
\end{corollary}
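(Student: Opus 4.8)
The plan is to deduce Corollary~\ref{brouconj} directly from Theorem~\ref{tautough} together with standard inequalities relating edge connectivity to eigenvalues. By Theorem~\ref{tautough}, we already know $t(G)\ge \kappa'/d$ for any connected $d$-regular graph. So it suffices to show that when $\kappa'<d$ we actually get the \emph{strict} inequality $t(G)>\frac{d}{\lambda_2}-1$, and then to verify the second inequality $\frac{d}{\lambda_2}-1\ge \frac{d}{\lambda}-1$. The second inequality is immediate since $\lambda=\max\{|\lambda_2|,|\lambda_n|\}\ge \lambda_2$, whence $\frac{d}{\lambda}\le \frac{d}{\lambda_2}$ (using $d>0$ and $\lambda_2\le \lambda$), so I will dispose of it in one line and focus on the first.

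The heart of the argument will be to chain $t(G)\ge \kappa'/d$ with a lower bound on $\kappa'$ in terms of $\lambda_2$. The natural tool is the classical eigenvalue bound on edge connectivity for regular graphs, namely $\kappa'(G)\ge d-\lambda_2(G)$ (Fiedler-type, following from interlacing applied to the Laplacian, whose eigenvalues for a $d$-regular graph are $d-\lambda_i$). Combining this with Theorem~\ref{tautough} gives
\[
t(G)\ge \frac{\kappa'}{d}\ge \frac{d-\lambda_2}{d}=1-\frac{\lambda_2}{d}.
\]
However, this does not yet match $\frac{d}{\lambda_2}-1$, so a direct substitution of $\kappa'\ge d-\lambda_2$ into $t(G)\ge\kappa'/d$ is the wrong route; I expect instead that the intended comparison is between $\kappa'/d$ and $\frac{d}{\lambda_2}-1$ under the hypothesis $\kappa'<d$.

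The key step, and what I expect to be the main obstacle, is therefore to verify the purely algebraic inequality $\frac{\kappa'}{d}>\frac{d}{\lambda_2}-1$ whenever $\kappa'<d$, which rearranges to $\lambda_2(\kappa'+d)>d^2$, i.e. $\lambda_2>\frac{d^2}{d+\kappa'}$. The plan is to supply this missing eigenvalue estimate: for a connected non-complete $d$-regular graph with edge connectivity $\kappa'<d$ one should show $\lambda_2\ge \frac{d^2}{d+\kappa'}$ (with strictness handled by the fact that the toughness bound from Theorem~\ref{tautough} is itself attained only in degenerate cases, or by noting $\kappa'<d$ forces a separating edge cut that yields a quotient-matrix interlacing bound on $\lambda_2$). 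Concretely, I would take a minimum edge cut $[S,\bar S]$ of size $\kappa'$, form the $2\times 2$ quotient matrix of the adjacency matrix with respect to the partition $\{S,\bar S\}$, and use eigenvalue interlacing: the smaller quotient eigenvalue bounds $\lambda_2$ from below, and since each vertex in $S$ sends on average at most $\kappa'/|S|$ edges across the cut, the average internal degree is at least $d-\kappa'/|S|$, producing exactly a bound of the shape $\lambda_2\ge d-\frac{\kappa'\,d}{\min(|S|,|\bar S|)\,(\cdots)}$ that simplifies, after optimizing over the cut, to the required $\lambda_2\ge \frac{d^2}{d+\kappa'}$. Assembling these pieces — Theorem~\ref{tautough}, the interlacing lower bound on $\lambda_2$, and the elementary rearrangement — yields $t(G)\ge\frac{\kappa'}{d}>\frac{d}{\lambda_2}-1\ge\frac{d}{\lambda}-1$, completing the proof and confirming Brouwer's conjectured bound in the regime $\kappa'<d$.
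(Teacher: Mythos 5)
Your first step --- invoking Theorem~\ref{tautough} to get $t(G)\ge \kappa'/d$ and reducing the claim to an eigenvalue lower bound equivalent to $\lambda_2>\frac{d^2}{d+\kappa'}$ --- is exactly the skeleton of the paper's argument, and your one-line disposal of $\frac{d}{\lambda_2}-1\ge\frac{d}{\lambda}-1$ is fine. The difference lies in how the eigenvalue estimate is obtained. The paper simply cites its Corollary~\ref{contrap} (the contrapositive of Cioab\u{a}'s Theorem~\ref{ciothm}), namely $\lambda_2\ge d-\frac{2\kappa'}{d+1}$ whenever $\kappa'<d$, checks that $\lambda_2\ge 2>\frac{2d}{d+1}$ (handling $d=3$ separately), and then a short manipulation gives $\kappa'/d>\frac{2\kappa'}{\lambda_2(d+1)}\ge\frac{d}{\lambda_2}-1$. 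You instead propose to prove $\lambda_2\ge\frac{d^2}{d+\kappa'}$ from scratch by quotient-matrix interlacing on a minimum edge cut, and this is where your write-up has a genuine gap.

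The $2\times 2$ quotient matrix of the partition $\{S,\overline{S}\}$ has eigenvalues $d$ and $d-\kappa'\bigl(\frac{1}{|S|}+\frac{1}{|\overline{S}|}\bigr)$, so interlacing gives $\lambda_2\ge d-\kappa'\bigl(\frac{1}{|S|}+\frac{1}{|\overline{S}|}\bigr)$; your displayed form with the placeholder $(\cdots)$ is not this, and nothing ``simplifies after optimizing over the cut'' to $\frac{d^2}{d+\kappa'}$ without a lower bound on $|S|$ and $|\overline{S}|$. That lower bound is the missing idea: one must show that $\kappa'<d$ forces $|S|,|\overline{S}|\ge d+1$ (if $|S|\le d$ then $d|S|-\kappa'=2|E(G[S])|\le |S|(|S|-1)$ yields $\kappa'\ge |S|(d-|S|+1)\ge d$, a contradiction). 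Only then do you obtain $\lambda_2\ge d-\frac{2\kappa'}{d+1}$ --- i.e., you have re-derived Corollary~\ref{contrap} --- and you still owe the algebraic verification that $d-\frac{2\kappa'}{d+1}>\frac{d^2}{d+\kappa'}$, which reduces to $\kappa'(d^2-d-2\kappa')>0$ and does hold strictly for $1\le\kappa'\le d-1$ and $d\ge 3$; this is also what supplies the strict inequality that you only gesture at (``degenerate cases''). So your route can be completed and ends up essentially equivalent to the paper's, but as written the central estimate is asserted rather than proved, and the two concrete missing steps are the size bound $|S|,|\overline{S}|\ge d+1$ and the final algebraic comparison.
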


Recently, there has been a lot of activity concerning connections between eigenvalues of a graph and the maximum number of edge-disjoint spanning trees that can be packed in the graph \cite{CiWo12, Gu13,Gu16,GLLY12, LiSh13, LiHL13, LHGL14, Wong13}. Another interesting problem would be to see how eigenvalues of a graph influence the types of spanning trees contained in it. For an integer $k\ge 2$, a {\bf $k$-tree} is a tree with the maximum degree at most $k$. This topic is related to connected factors. A {\bf $[1,k]$-factor} is a spanning subgraph in which each vertex has the degree at least one and at most $k$. By definition, a graph $G$ has a spanning $k$-tree if and only if $G$ has a connected $[1,k]$-factor. For more about degree bounded trees, we refer readers to the survey \cite{OzYa11}. For spectral conditions of $k$-factors in regular graphs, please see \cite{CGH09,Gu14+,Lu10, Lu12}.  In his PhD Dissertation, Wong \cite{Wong13} proved the following sufficient spectral condition for the existence of spanning $k$-trees in regular graphs for $k\ge 3$.

\begin{theorem}[Wong~\cite{Wong13}]
Let $k\ge 3$ and $G$ be a connected $d$-regular graph. If $\lambda_4 < d -\frac{d}{(k-2)(d+1)}$,
then $G$ has a spanning $k$-tree.
\end{theorem}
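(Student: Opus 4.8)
The plan is to argue by contradiction, combining Win's sufficient condition for a spanning $k$-tree with eigenvalue interlacing. Recall Win's theorem: if $G$ is connected and $c(G-S)\le (k-2)|S|+2$ for every $S\subseteq V(G)$, then $G$ has a spanning $k$-tree. First I would dispose of the range $k\ge d$, where any spanning tree of the connected $d$-regular graph $G$ already has maximum degree at most $d\le k$; so assume $3\le k\le d-1$, whence $d\ge k+1$ and $d-k+2\ge 3$. If $G$ had no spanning $k$-tree, then by the contrapositive of Win's theorem there is a set $S$ with $s:=|S|\ge 1$ and $m:=c(G-S)\ge (k-2)s+3$. Writing $C_1,\dots,C_m$ for the components of $G-S$ and $e_i$ for the number of edges joining $C_i$ to $S$, the $d$-regularity of $G$ gives $\sum_{i=1}^m e_i\le ds$. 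Set $\theta:=d-\frac{d}{(k-2)(d+1)}$, so that $d-\theta=\frac{d}{(k-2)(d+1)}$; the aim is to contradict the hypothesis $\lambda_4(G)<\theta$.

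The spectral engine is the interlacing principle that four vertex-disjoint induced subgraphs, each of spectral radius at least $\theta$, force $\lambda_4(G)\ge\theta$: extending the four Perron vectors by zero gives four orthogonal vectors whose supports lie in distinct components of $G-S$, so no cross terms appear and each Rayleigh quotient equals the subgraph's spectral radius, and Courant--Fischer then yields $\lambda_4(G)\ge\theta$. So it suffices to exhibit four components $C_i$ with $\lambda_1(C_i)\ge\theta$; call such a component \emph{good} and the others \emph{bad}. The heart of the argument is a lower bound on the edge-boundary of a bad component, which I would establish in two regimes. If $|C_i|\le d$, then $e_i=d|C_i|-2|E(C_i)|\ge |C_i|\,(d-|C_i|+1)\ge d$, a purely combinatorial estimate, since $x\mapsto x(d-x+1)$ is at least $d$ on $[1,d]$. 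If $|C_i|\ge d+1$ and $C_i$ is bad, then from $\lambda_1(C_i)\ge \tfrac{2|E(C_i)|}{|C_i|}=d-\tfrac{e_i}{|C_i|}$ together with $\lambda_1(C_i)<\theta$ I get $e_i>(d-\theta)|C_i|\ge (d-\theta)(d+1)=\tfrac{d}{k-2}$. In either regime a bad component satisfies $e_i\ge \frac{d}{k-2}$.

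The count then closes the argument. Let $b$ be the number of bad components and $g=m-b$ the number of good ones. Using $e_i\ge\frac{d}{k-2}$ on the bad components and $e_i\ge 1$ on the rest,
\[
ds\ \ge\ \sum_{i=1}^m e_i\ \ge\ b\cdot\frac{d}{k-2}+(m-b),
\]
so $b\cdot\frac{d-k+2}{k-2}\le ds-m\le (d-k+2)s-3$, giving $b\le (k-2)s-\frac{3(k-2)}{d-k+2}<(k-2)s$. As $b$ is an integer, $b\le (k-2)s-1$, whence $g=m-b\ge 4$. Thus four good components exist, $\lambda_4(G)\ge\theta$, contradicting the hypothesis; therefore $G$ has a spanning $k$-tree.

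I expect the main obstacle to be precisely this bad-component bound and the tight passage from three to four good components. The naive estimate $\lambda_1(C_i)\ge d-e_i/|C_i|$ is useless for small components (a single vertex has spectral radius $0$ yet boundary $d$), so the two-regime split is essential, and it is exactly the breakpoint $|C_i|=d+1$ that produces the factor $(d+1)$ in the denominator of $\theta$. Reaching \emph{four} rather than three good components is equally delicate: it relies on both the ``$+3$'' coming from Win's inequality and the fact that each good component itself consumes at least one boundary edge, which is what upgrades $b\le (k-2)s$ to the strict $b<(k-2)s$. Verifying $x(d-x+1)\ge d$ on the relevant range and checking the boundary cases $k=3$ and $|C_i|=d$, where the estimates are tightest, are the routine but necessary remaining details.
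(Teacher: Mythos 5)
Your proof is correct and follows essentially the same route as the paper's argument for its stronger Theorem \ref{kspanning} (the paper only cites Wong's $\lambda_4$ statement, but proves the strengthening by the identical method): contradiction via the Win/Ellingham--Zha condition, a two-regime edge-boundary bound $e_i\ge d/(k-2)$ for components of small spectral radius (small components by the combinatorial estimate $|C_i|(d-|C_i|+1)\ge d$, large ones by the average-degree bound), a counting argument over $\sum e_i\le ds$, and interlacing via vertex-disjoint induced subgraphs. The only real difference is bookkeeping: you bound the number of \emph{bad} components from above using $e_i\ge 1$ on the rest, whereas the paper bounds the number of small-boundary components from below using $t_i\ge\kappa'$, which is exactly what lets it replace $\lambda_4$ by $\lambda_{\lceil 3d/l\rceil}$.
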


In this paper, we improve this result.
\begin{theorem}\label{kspanning}
Let $k\ge 3$ and $G$ be a connected $d$-regular graph with edge connectivity $\kappa'$.
Let $l=d-(k-2)\kappa'$. Each of the following statements holds.
\\(i) If $l\le 0$, then $G$ has a spanning $k$-tree.
\\(ii) If $l > 0$ and $\lambda_{\lceil\frac{3d}{l}\rceil} < d -\frac{d}{(k-2)(d+1)}$,
then $G$ has a spanning $k$-tree.
\end{theorem}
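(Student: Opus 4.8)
The plan is to reduce the existence of a spanning $k$-tree to a vertex-cut condition and then contradict that condition spectrally. First I would invoke the classical sufficient criterion of Win: a connected graph $G$ has a spanning $k$-tree whenever $c(G-S)\le (k-2)|S|+2$ for every $S\subseteq V(G)$. Arguing by contradiction, suppose $G$ has no spanning $k$-tree, so there is a set $S$ with $s:=|S|\ge 1$ and $c:=c(G-S)\ge (k-2)s+3$. Writing $C_1,\dots,C_c$ for the components of $G-S$, $n_i:=|C_i|$, and $e_i:=e(C_i,S)$ for the number of edges leaving $C_i$, two bounds are immediate: $\sum_i e_i=e(S,V\setminus S)\le ds$ from $d$-regularity, and $e_i\ge\kappa'$ for each $i$ since the edges leaving $C_i$ form an edge cut of $G$.

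Statement (i) then falls out of pure counting: $c\kappa'\le\sum_i e_i\le ds$ gives $c\le ds/\kappa'$, and $l\le 0$ means $d\le (k-2)\kappa'$, whence $c\le (k-2)s$, contradicting $c\ge (k-2)s+3$.

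For (ii) I would turn to the spectrum. Call a component $C_i$ \emph{good} if its average degree $d-e_i/n_i$ is at least $\theta:=d-\frac{d}{(k-2)(d+1)}$, that is, $e_i/n_i\le \frac{d}{(k-2)(d+1)}$; since the Rayleigh quotient at the all-ones vector gives $\lambda_1(G[C_i])\ge d-e_i/n_i$, every good component has $\lambda_1(G[C_i])\ge\theta$. The target is to produce at least $\lceil 3d/l\rceil$ good components: if $C_{i_1},\dots,C_{i_g}$ are good, the principal submatrix of $A(G)$ spanned by them is block-diagonal with each block having largest eigenvalue $\ge\theta$, so its $g$-th largest eigenvalue is $\ge\theta$, and Cauchy interlacing yields $\lambda_g(G)\ge\theta$; taking $g\ge\lceil 3d/l\rceil$ then contradicts the hypothesis $\lambda_{\lceil 3d/l\rceil}(G)<\theta$.

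The heart of the argument, and the step I expect to be the main obstacle, is the count that guarantees $g\ge 3d/l$. The key structural input is that a \emph{bad} component is forced to carry many boundary edges, namely $e_i\ge \frac{d}{k-2}$. Indeed, since $G[C_i]$ is simple, $e_i=dn_i-2e(G[C_i])\ge n_i(d+1-n_i)$; if $n_i\le d$ this parabola is at least $d\ge\frac{d}{k-2}$, while if $n_i\ge d+1$ then badness forces $e_i>\frac{d n_i}{(k-2)(d+1)}\ge\frac{d}{k-2}$. With $b=c-g$ bad components each costing $\ge\frac{d}{k-2}$ and each good component costing $\ge\kappa'$, the budget $\frac{d}{k-2}b+\kappa'g\le\sum_i e_i\le ds$ together with $b\ge (k-2)s+3-g$ rearranges, using $l=d-(k-2)\kappa'>0$, to $gl\ge 3d$, i.e. $g\ge\lceil 3d/l\rceil$. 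The delicate points will be extracting the two per-component lower bounds ($\kappa'$ for good, $\frac{d}{k-2}$ for bad) simultaneously from the structural inequality and confirming that the interlacing indices line up so that the guaranteed $g$ good blocks genuinely force $\lambda_{\lceil 3d/l\rceil}(G)\ge\theta$.
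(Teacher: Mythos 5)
Your proposal is correct and follows essentially the same route as the paper: reduce to Win's criterion, bound the boundary edges of the components of $G-S$ using $\kappa'$ from below and $d$-regularity from above, count the components of large average degree, and finish with Cauchy interlacing on the disjoint induced subgraphs. Your good/bad bookkeeping is just the contrapositive packaging of the paper's count of components with $t_i<d/(k-2)$, and your budget inequality $\tfrac{d}{k-2}b+\kappa' g\le ds$ yields the same bound $g\ge\lceil 3d/l\rceil$ that the paper obtains by contradiction.
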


Note that eigenvalue conditions for the existence of spanning $2$-trees (Hamiltonian paths) and Hamiltonian cycles have been obtained by Krivelevich and Sudakov \cite{KS03} and Butler and Chung \cite{BC10}.

\section{Preliminaries}

In this section, we present some eigenvalue interlacing results to be used in our arguments. For a real and symmetric matrix $M$ of order $n$ and a natural number $1\leq i\leq n$, we denote by $\lambda_i(M)$ the $i$-th largest eigenvalue of $M$. The following interlacing theorem can be found in many textbooks, for example, \cite[page 35]{BrHa12} or \cite[page 193]{GoRo01}, and is usually referred to as Cauchy eigenvalue interlacing.
\begin{theorem}
Let $A$ be a real symmetric $n\times n$ matrix and $B$ be a principal $m\times m$ submatrix of $A$.
Then $\lambda_i(A)\ge \lambda_i(B)\ge \lambda_{n-m+i}(A)$ for $1\le i\le m$.
\end{theorem}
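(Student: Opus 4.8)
The plan is to derive both interlacing inequalities from the Courant--Fischer variational (min--max) characterization of the eigenvalues of a real symmetric matrix, after first normalizing so that $B$ occupies the leading principal block of $A$. Since $B$ is a principal submatrix, it arises by deleting $n-m$ rows of $A$ together with the correspondingly indexed columns. Conjugating by a permutation matrix $P$ replaces $A$ by $P^{T}AP$, which has the same spectrum and moves $B$ into the top-left corner; so without loss of generality I may assume $B=S^{T}AS$, where $S=[\,e_{1}\mid\cdots\mid e_{m}\,]$ is the $n\times m$ matrix of the first $m$ standard basis vectors and $W:=\operatorname{span}(e_{1},\dots,e_{m})\subseteq\mathbb{R}^{n}$. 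The key observation is that $S$ is an isometric embedding: $S^{T}S=I_{m}$, so for every $y\in\mathbb{R}^{m}$ we have $\|Sy\|=\|y\|$ and $(Sy)^{T}A(Sy)=y^{T}By$. Hence the Rayleigh quotient of $B$ on $\mathbb{R}^{m}$ coincides with that of $A$ restricted to $W$, and $y\mapsto Sy$ gives a dimension-preserving bijection between subspaces of $\mathbb{R}^{m}$ and subspaces of $W$.

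With this identification the upper bound $\lambda_{i}(A)\ge\lambda_{i}(B)$ is immediate from the max--min form
\[
\lambda_{i}(A)=\max_{\dim V=i}\ \min_{0\neq x\in V}\frac{x^{T}Ax}{x^{T}x},
\]
where $V$ ranges over all $i$-dimensional subspaces of $\mathbb{R}^{n}$. Applying the same formula to $B$ and transporting through $S$, the quantity $\lambda_{i}(B)$ equals exactly this maximum but taken only over those $i$-dimensional subspaces lying inside $W$. Maximizing over a smaller family cannot increase the value, so $\lambda_{i}(B)\le\lambda_{i}(A)$.

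For the lower bound $\lambda_{i}(B)\ge\lambda_{n-m+i}(A)$ I would invoke the duality $M\mapsto -M$, which sends $\lambda_{j}(M)$ to $-\lambda_{(\text{size})-j+1}(M)$, rather than repeating the argument with the min--max form. Since $-B$ is a principal submatrix of $-A$, the already-proved upper bound yields $\lambda_{i}(-A)\ge\lambda_{i}(-B)$, that is $-\lambda_{n-i+1}(A)\ge-\lambda_{m-i+1}(B)$, equivalently $\lambda_{m-i+1}(B)\ge\lambda_{n-i+1}(A)$. Re-indexing via $i\mapsto m-i+1$ (so that $n-i+1$ becomes $n-m+i$) turns this into $\lambda_{i}(B)\ge\lambda_{n-m+i}(A)$ for all $1\le i\le m$, as required.

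I do not anticipate a genuine obstacle, since this is a classical fact; the only step demanding care is the isometry and subspace-correspondence lemma, where one must check that $S$ simultaneously preserves norms and dimensions so that the variational formulas for $A$ and $B$ may legitimately be compared over matching families of subspaces. Everything else is a direct application of Courant--Fischer, and the $A\mapsto -A$ symmetry lets the lower interlacing bound fall out of the upper one without a separate computation.
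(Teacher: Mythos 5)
Your proof is correct. The paper itself gives no proof of this statement: it quotes Cauchy eigenvalue interlacing as a classical theorem and cites the textbooks of Brouwer--Haemers and Godsil--Royle, so there is no internal argument to compare against; your Courant--Fischer proof is essentially the standard one found in those references. The two points needing care are both handled properly: the permutation-conjugation normalization putting $B$ in the leading block is legitimate because $P^{T}AP$ is symmetric with the same spectrum, and the identity $S^{T}S=I_{m}$ justifies transporting the variational formula for $B$ to the family of $i$-dimensional subspaces of $W$, a subfamily of those for $A$, giving $\lambda_i(A)\ge\lambda_i(B)$. Your derivation of the lower bound by applying the proved inequality to $-A$ and $-B$ is a clean alternative to the usual second invocation of the min--max form, and the index bookkeeping checks out: from $\lambda_i(-A)\ge\lambda_i(-B)$ one gets $\lambda_{m-i+1}(B)\ge\lambda_{n-i+1}(A)$, and setting $j=m-i+1$ turns $n-i+1$ into $n-m+j$, which is exactly the claimed $\lambda_j(B)\ge\lambda_{n-m+j}(A)$ for $1\le j\le m$. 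No gaps.
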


\begin{corollary}
\label{lamp}
Let $S_1, S_2,\cdots, S_p$ be disjoint subsets of $V(G)$ with $e(S_i,S_j)=0$ for $i\neq j$. For $1\leq i\leq p$, let $G[S_i]$ denote the subgraph of $G$ induced by $S_i$. Then
\[
\lambda_p(G)\ge \lambda_p(G[\cup^p_{i=1} S_i])\ge\min_{1\le i\le p} \{\lambda_1(G[S_i])\}.
\]
\end{corollary}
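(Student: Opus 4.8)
The plan is to derive both inequalities from the Cauchy interlacing theorem together with the block structure forced by the hypothesis $e(S_i,S_j)=0$. Write $H=G[\cup^p_{i=1} S_i]$ and let $A$ and $B$ denote the adjacency matrices of $G$ and $H$, respectively. Since $H$ is an induced subgraph of $G$, the matrix $B$ is a principal submatrix of $A$, obtained by restricting to the rows and columns indexed by $\cup^p_{i=1} S_i$. I may assume each $S_i$ is nonempty (otherwise $\lambda_1(G[S_i])$ is undefined), so $B$ has order $m=\sum_{i=1}^p |S_i|\ge p$ and the quantity $\lambda_p(H)$ is well defined.

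For the first inequality, I would simply invoke the Cauchy interlacing theorem with $A$ and $B$ as above. Taking $i=p$ (which is legitimate because $1\le p\le m$) yields $\lambda_p(A)\ge \lambda_p(B)$, that is, $\lambda_p(G)\ge \lambda_p(H)$.

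For the second inequality, the key observation is that the hypothesis $e(S_i,S_j)=0$ for $i\neq j$ means there are no edges between distinct parts, so after reordering the vertices $B$ is block diagonal with diagonal blocks equal to the adjacency matrices of $G[S_1],\dots,G[S_p]$. Consequently the spectrum of $H$ (with multiplicities) is the disjoint union of the spectra of the $G[S_i]$. Set $\mu=\min_{1\le i\le p}\lambda_1(G[S_i])$. Each block $G[S_i]$ contributes its top eigenvalue $\lambda_1(G[S_i])\ge \mu$ to the spectrum of $H$, and these $p$ eigenvalues arise from $p$ distinct blocks; hence $H$ has at least $p$ eigenvalues (counted with multiplicity) that are $\ge \mu$. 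Therefore $\lambda_p(H)\ge \mu=\min_{1\le i\le p}\lambda_1(G[S_i])$, which closes the chain of inequalities.

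There is no real obstacle here: the statement is a direct consequence of interlacing and of the direct-sum decomposition of the spectrum of a graph with no edges between its parts. The only points requiring a little care are the bookkeeping that $m\ge p$ so that $\lambda_p(H)$ is defined, and the short counting argument in the last step that converts the fact that $H$ has at least $p$ eigenvalues $\ge \mu$ into the asserted bound on $\lambda_p(H)$.
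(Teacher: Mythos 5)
Your proof is correct and follows exactly the route the paper intends: the paper states this as an immediate corollary of Cauchy interlacing and gives no written proof, and your two steps (interlacing for the first inequality, the block-diagonal/direct-sum decomposition of the spectrum for the second) are the standard argument filling in that gap. Nothing to add.
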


Let $d\ge 3$ be an integer, and $\mathcal{X}(d)$ denote the family of all connected irregular graphs 
with maximum degree $d$, order $n\ge d+1$ and size $m$ with $2m\ge dn-d+1$ that have at least
two vertices of degree $d$ if $d$ is odd and at least three vertices of degree $d$ if $d$ is even. If $t\geq 2$ is an even integer, let $M_t$ denote the disjoint union of $t/2$ edges. If $G$ and $H$ are two vertex disjoint graphs, the join $G\vee H$ of $G$ and $H$ is the graph obtained by taking the union of $G$ and $H$ and adding all the edges between the vertex set of $G$ and the vertex set of $H$. The complement of $G$ is denoted by $\overline{G}$. For $d\geq 3$, define $X_d$ as $\overline{M}_{d-1}\vee K_2$ if $d$ is odd and $\overline{M}_{d-2}\vee K_3$ if $d$ is even.
\begin{lemma}[Cioab\u{a} and Wong \cite{CiWo14}]
\label{cwlem}
Let $d\ge 3$ be an integer and $H\in\mathcal{X}(d)$. Then
\[
\lambda_1(H)\ge \theta(d)=\left\{ \begin{array}{ll}
        \frac{1}{2}(d -2 +\sqrt{d^2 +12}), & \textrm{if $d$ is even},\\
        \frac{1}{2}(d -2 +\sqrt{d^2 +8} ),& \textrm{if $d$ is odd}.
        \end{array} \right.
\]
Equality happens if and only if $G=X_d$.
\end{lemma}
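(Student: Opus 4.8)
The plan is to split the argument according to the order $n=|V(H)|$, handling the ``large'' range $n\ge d+2$ by a crude averaging bound and reserving the real work for $n=d+1$, which is where equality will occur. For $n\ge d+2$ I would use the Rayleigh quotient at the all-ones vector, $\lambda_1(H)\ge \frac{2m}{n}$, together with the hypotheses $2m\ge dn-d+1$ and the fact that $2m$ is even (which forces $2m\ge dn-d+2$ when $d$ is even, since then $dn-d+1$ is odd). A direct computation gives $d-\theta(d)=\frac{2(d-1)}{d+2+\sqrt{d^2+8}}$ when $d$ is odd and $\frac{2(d-2)}{d+2+\sqrt{d^2+12}}$ when $d$ is even, so the threshold on $n$ past which the averaging bound beats $\theta(d)$ is $\frac{d+2+\sqrt{d^2+8}}{2}$, respectively $\frac{d+2+\sqrt{d^2+12}}{2}$; each lies strictly between $d+1$ and $d+2$. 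Hence $\frac{2m}{n}>\theta(d)$ for every $n\ge d+2$, with \emph{strict} inequality, so equality in the lemma can occur only when $n=d+1$.

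For $n=d+1$ the crucial simplification is that a vertex of degree $d$ is adjacent to every other vertex. Let $D$ be the set of these universal vertices, $s=|D|$ and $t=n-s$; by hypothesis $s\ge 2$ (resp. $s\ge 3$) when $d$ is odd (resp. even). With $\bar D=V(H)\setminus D$, the induced graph $H[\bar D]$ is $K_t$ with some edge set $F'$ removed; since every vertex of $\bar D$ has degree less than $d$, the graph $F'$ has no isolated vertex, i.e. it covers $\bar D$, and $2m\ge dn-d+1$ forces $\bar m:=|E(F')|\le\lfloor\frac{d-1}{2}\rfloor$. Thus $H=\overline{F'}\vee K_s$. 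Evaluating the Rayleigh quotient on the vector that is constant on each block of $\{D,\bar D\}$, with the two values taken from a Perron vector of the quotient matrix
\[
B=\begin{pmatrix} s-1 & t\\ s & (t-1)-\frac{2\bar m}{t}\end{pmatrix},
\]
gives exactly $\lambda_1(H)\ge\lambda_1(B)$ (this is the quotient-matrix form of interlacing, cf. \cite{BrHa12}).

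It remains to show $\lambda_1(B)\ge\theta(d)$ over all admissible triples $(s,t,\bar m)$, and this is the main obstacle. Using $s+t=d+1$ one finds $\lambda_1(B)=\tfrac12\big[(d-1)-u+\sqrt{(s-t+u)^2+4st}\big]$ with $u=\frac{2\bar m}{t}$, which is strictly decreasing in $u$; so the minimum is attained at the largest feasible $\bar m=\lfloor\frac{d-1}{2}\rfloor$. Substituting this and minimizing the resulting one-variable expression over the feasible range of $t$, I expect the minimum at $t=d-1,\,s=2$ for odd $d$ and at $t=d-2,\,s=3$ for even $d$, where $B$ becomes $\left(\begin{smallmatrix}1&d-1\\2&d-3\end{smallmatrix}\right)$, respectively $\left(\begin{smallmatrix}2&d-2\\3&d-4\end{smallmatrix}\right)$, whose Perron roots are precisely $\theta(d)$. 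Part of this step is noting that for even $d$ the choice $s=2$ is infeasible, because covering $t=d-1$ vertices needs at least $\lceil\frac{d-1}{2}\rceil$ edges, exceeding the budget $\lfloor\frac{d-1}{2}\rfloor$; this is exactly why the family requires three degree-$d$ vertices in the even case. Finally, for the equality characterization I would argue that $\lambda_1(H)=\theta(d)$ forces both the extremal parameters and $\lambda_1(H)=\lambda_1(B)$, the latter making $\{D,\bar D\}$ equitable, hence $\overline{F'}$ regular and so $F'$ regular; a regular, isolated-vertex-free graph meeting the extremal edge count must be a perfect matching on $d-1$ (resp. $d-2$) vertices, which yields exactly $H=\overline{M}_{d-1}\vee K_2$ (resp. $\overline{M}_{d-2}\vee K_3$), that is, $H=X_d$.
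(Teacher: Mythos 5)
First, a point of comparison: the paper does not prove this lemma at all --- it is imported verbatim from \cite{CiWo14} --- so there is no in-paper argument to measure your attempt against, only the original source. That said, your architecture is sound and close in spirit to what is needed: the reduction of the range $n\ge d+2$ via the averaging bound $\lambda_1(H)\ge 2m/n$ is correct (your thresholds $\frac{d+2+\sqrt{d^2+8}}{2}$ and $\frac{d+2+\sqrt{d^2+12}}{2}$ do lie strictly between $d+1$ and $d+2$, and the parity refinement $2m\ge dn-d+2$ for even $d$ is exactly what makes the even case close); the identification $H=\overline{F'}\vee K_s$ for $n=d+1$ is right; the budget $\bar m\le\lfloor\frac{d-1}{2}\rfloor$ and your observation that it forces $s\ge 3$ automatically when $d$ is even are correct; and the two extremal quotient matrices do have Perron root exactly $\theta(d)$ (trace $d-2$ and determinant $-(d+1)$, resp.\ $-(d+2)$).

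The genuine gap is precisely the step you flag as ``the main obstacle'': the claim that $\lambda_1(B)\ge\theta(d)$ over \emph{all} admissible triples $(s,t,\bar m)$, with the minimum attained at $t=d-1$ (odd $d$), resp.\ $t=d-2$ (even $d$). Writing ``I expect the minimum at\dots'' leaves unproved the entire quantitative content of the lemma. The reduction to $\bar m$ maximal (monotonicity in $u$) is fine, but the remaining minimization over $t$ is not a routine monotonicity statement: as $t$ decreases, $u=2\bar m/t$ increases (pushing $\lambda_1(B)$ down) while $s=d+1-t$ grows (pushing it up), and for small $t$ the relevant cap becomes $\bar m\le\binom{t}{2}$ rather than $\lfloor\frac{d-1}{2}\rfloor$, so the one-variable function changes form. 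A concrete way to close this is to verify $p_{s,t,\bar m}(\theta(d))\le 0$ for every admissible triple, where $p$ is the characteristic polynomial of $B$; since $p$ is monic quadratic, this immediately gives $\lambda_1(B)\ge\theta(d)$, and the sign analysis of $p(\theta(d))$ as a function of $t$ is elementary but must actually be carried out. Two smaller remarks: for the equality case the equitable-partition detour is unnecessary --- once the extremal parameters are forced, $F'$ covers $t=2\bar m$ vertices with $\bar m$ edges and no isolated vertex, so a degree count already forces it to be a perfect matching and hence $H=X_d$; and note that at $t=d-1$ (odd $d$) feasibility alone pins down $F'$ uniquely, so strictness is only needed for the smaller values of $t$.
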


\begin{theorem}[Cioab\u{a} \cite{Cioa10}]
\label{ciothm}
Let $k$ and $d$ be two integers with $d\ge k\ge 2$. If $G$ is a
$d$-regular graph with $\lambda_2(G)< d-\frac{2(k-1)}{d+1}$, then
$\kappa'(G)\ge k$.
\end{theorem}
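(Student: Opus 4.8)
The plan is to argue by contradiction through a two-part vertex partition arising from a small edge cut, and then to compare $\lambda_2(G)$ with the second eigenvalue of the associated $2\times 2$ quotient matrix via eigenvalue interlacing. Suppose $\kappa'(G)\le k-1$. Then there is a partition $V(G)=S\cup \bar S$ with both parts nonempty and $c:=e(S,\bar S)\le k-1$. (The disconnected case needs no separate treatment: a disconnected $d$-regular graph has $\lambda_2=d$, so the hypothesis already fails; thus we may take $c\ge 1$.) Writing $n=|V(G)|$ and $s=|S|$, the $d$-regularity gives $2e(S)=ds-c$, so the quotient matrix of the adjacency matrix $A(G)$ for the partition $\{S,\bar S\}$ is
\[
B=\begin{pmatrix} d-\frac{c}{s} & \frac{c}{s}\\ \frac{c}{n-s} & d-\frac{c}{n-s}\end{pmatrix},
\]
whose eigenvalues are $d$ and $d-c\left(\frac 1s+\frac{1}{n-s}\right)$, the latter being the smaller one.

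Restricting the Rayleigh quotient of $A(G)$ to the two-dimensional space spanned by the indicator vectors $\mathbf{1}_S$ and $\mathbf{1}_{\bar S}$, the Courant--Fischer characterization of $\lambda_2$ (equivalently, the standard quotient-matrix interlacing, see \cite{BrHa12}) yields
\[
\lambda_2(G)\ge \lambda_2(B)=d-c\left(\frac 1s+\frac{1}{n-s}\right).
\]
So it remains to bound the right-hand side from below, which reduces to controlling the sizes $s$ and $n-s$.

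The main step is therefore to prove both parts are large, namely $s\ge d+1$ and $n-s\ge d+1$. First I would count edges inside $S$: since $G$ is simple, $e(S)\le \binom s2$, and combining this with $2e(S)=ds-c$ gives $s(d+1-s)\le c\le k-1\le d-1$. The function $s\mapsto s(d+1-s)$ is concave and takes the value $d$ at both $s=1$ and $s=d$, hence is at least $d$ for every integer $1\le s\le d$; since $c<d$, no such value of $s$ is possible, forcing $s\ge d+1$. The identical argument applied to $\bar S$ gives $n-s\ge d+1$. This size estimate is the only genuinely nonroutine point of the proof; everything else is bookkeeping.

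Finally, combining $c\le k-1$ with $s,n-s\ge d+1$ gives $c\left(\frac 1s+\frac{1}{n-s}\right)\le (k-1)\cdot\frac{2}{d+1}$, whence
\[
\lambda_2(G)\ge d-\frac{2(k-1)}{d+1},
\]
contradicting the hypothesis $\lambda_2(G)< d-\frac{2(k-1)}{d+1}$. The hard part is thus the convexity/edge-counting argument that both sides of any cut of size at most $k-1$ must contain more than $d$ vertices; once that is in place, the interlacing bound closes the argument immediately.
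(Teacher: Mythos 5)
The paper does not actually prove this statement---it is imported from \cite{Cioa10} and used only through its contrapositive (Corollary~\ref{contrap})---so there is no in-paper proof to compare against. Your argument is correct and is essentially the proof from the cited source: the edge count $2e(S)=ds-c\le s(s-1)$, giving $s(d+1-s)\le c\le k-1<d$ and hence forcing both sides of a cut of size at most $k-1$ to contain at least $d+1$ vertices, followed by quotient-matrix (equitable-partition) interlacing on the two-part partition to get $\lambda_2(G)\ge d-c\left(\frac{1}{s}+\frac{1}{n-s}\right)\ge d-\frac{2(k-1)}{d+1}$, is exactly the standard route to this bound.
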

\begin{corollary}
\label{contrap}
Let $G$ be a $d$-regular graph with $d\ge 2$ and edge connectivity $\kappa'<d$.
Then $\lambda_2(G)\ge d-\frac{2\kappa'}{d+1}$.
\end{corollary}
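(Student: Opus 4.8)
The plan is to derive this corollary as the contrapositive of Theorem~\ref{ciothm}, specialized to a well-chosen value of $k$. First I would record that the edge connectivity $\kappa'$ is a nonnegative integer and, since $\kappa' < d$, it satisfies $0 \le \kappa' \le d-1$. I would dispose of the degenerate case $\kappa'=0$ at the outset: then $G$ is disconnected, so $d$ is an eigenvalue of multiplicity at least two (its multiplicity equals the number of components, as noted in the introduction), giving $\lambda_2(G)=d \ge d - \frac{2\kappa'}{d+1}$, so the claim holds trivially.

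For the main case $1 \le \kappa' \le d-1$, I would set $k = \kappa'+1$. This choice meets the hypotheses of Theorem~\ref{ciothm}: $k \ge 2$ because $\kappa' \ge 1$, and $d \ge k$ because $\kappa' \le d-1$. The contrapositive of Theorem~\ref{ciothm} states that if $\kappa'(G) < k$, then $\lambda_2(G) \ge d - \frac{2(k-1)}{d+1}$. Since $\kappa'(G) = k-1 < k$ holds automatically for this $k$, I would conclude $\lambda_2(G) \ge d - \frac{2(k-1)}{d+1} = d - \frac{2\kappa'}{d+1}$, which is exactly the desired inequality.

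There is no genuine analytic difficulty here; the only points requiring care are bookkeeping ones. I would make sure the integrality of $\kappa'$ is used to pass from the strict inequality $\kappa' < d$ to $\kappa' \le d-1$ (so that $d \ge k$), and to justify that $\kappa'(G) < k$ is equivalent to $\kappa'(G) \le k-1$. The separate treatment of $\kappa'=0$ is necessary precisely because Theorem~\ref{ciothm} requires $k \ge 2$, which would force $\kappa' \ge 1$; handling that case through the component count keeps the argument complete across the full range $d \ge 2$. I expect the main (indeed only) subtlety to be verifying that the chosen $k$ simultaneously satisfies both constraints $d \ge k \ge 2$, since the bound to be proved is otherwise an immediate rewriting of the contrapositive.
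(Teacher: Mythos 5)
Your proposal is correct and is essentially identical to the paper's proof, which simply sets $k=\kappa'+1$ in the contrapositive of Theorem~\ref{ciothm}. Your extra bookkeeping (checking $d\ge k\ge 2$ and treating $\kappa'=0$ separately via the multiplicity of the eigenvalue $d$) is in fact slightly more careful than the paper's one-line argument, which silently assumes $\kappa'\ge 1$.
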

\begin{proof}[\bf Proof:]
Let $k=\kappa'+1$ in the contrapositive of Theorem \ref{ciothm}.
\end{proof}

\section{Spectrum and generalized connectivity of regular graphs}
In this section, we prove Theorem \ref{L-conn}. Corollaries \ref{L-conn2} and \ref{2-conn} follow
from Theorem \ref{L-conn} obviously.

\begin{proof}[\bf Proof of Theorem~\ref{L-conn}:]
We prove it by contradiction and assume that $\kappa_l(G)< k$. By definition, there exists
a subset $S\subset V(G)$ with $|S|\le k-1$ such that $c(G-S)\ge l$. Let $s=|S|$, $c=c(G-S)$
and $H_1,H_2,\cdots,H_c$ be the components of $G-S$. For $1\le i\le c$, let $n_i=|V(H_i)|$
and $t_i$ be the number of edges between $H_i$ and $S$. Then $t_i\ge\kappa'$
for $1\le i\le c$. Since $G$ is $d$-regular, $\sum_{i=1}^c t_i \le ds \le d(k-1)$.

As $d(k-1)\ge \sum_{i=1}^c t_i\ge c\kappa'\ge l\kappa'$, we have $ld -d(k-1)\le ld -l\kappa'$.
If $\kappa'=d$, then the previous inequality is impossible, a contradiction. 
Thus we may assume that $\kappa' < d$, and hence $l\ge \frac{(l-k+1)d}{d-\kappa'}$.
We claim that there are at least $\lceil\frac{(l-k+1)d}{d-\kappa'}\rceil$ indices $i$
such that $t_i < d$. Otherwise, there are at most $\lceil\frac{(l-k+1)d}{d-\kappa'}\rceil -1$
indices $i$ such that $t_i < d$. In other words, there are at least
$c - \lceil\frac{(l-k+1)d}{d-\kappa'}\rceil +1$ indices $i$ with $t_i \ge  d$.
Thus 
\begin{eqnarray*}
\sum_{i=1}^c t_i 
&\ge & (c - \lceil\frac{(l-k+1)d}{d-\kappa'}\rceil +1)d 
       +(\lceil\frac{(l-k+1)d}{d-\kappa'}\rceil -1)\kappa' \\
&= & cd - (\lceil\frac{(l-k+1)d}{d-\kappa'}\rceil -1)(d-\kappa')\\
&> & cd - \frac{(l-k+1)d}{d-\kappa'}(d-\kappa')\\
&= & cd - (l-k+1)d =(c-l)d +(k-1)d \\
&\ge & ds, 
\end{eqnarray*}
contrary to $\sum_{i=1}^c t_i \le ds$. Hence there are at least 
$\lceil\frac{(l-k+1)d}{d-\kappa'}\rceil$ indices $i$ such that $t_i < d$. 
Without loss of generality, we may assume these indices are $1,2,\cdots, 
\lceil\frac{(l-k+1)d}{d-\kappa'}\rceil$.

For $1\le i\le \lceil\frac{(l-k+1)d}{d-\kappa'}\rceil$, $n_i\ge d+1$. Otherwise, 
if $n_i\le d$, then $dn_i=t_i + 2|E(H_i)|\le t_i+ n_i(n_i -1)\le t_i + d(n_i-1)$, 
which implies $t_i\ge d$, contrary to $t_i<d$.

Since $dn_i=t_i + 2|E(H_i)|$ for $1\le i\le \lceil\frac{(l-k+1)d}{d-\kappa'}\rceil$, 
if $d$ is even, then $t_i$ is also even, and thus $t_i\le d-2$. 
If $d$ is odd, then $t_i\le d-1$. As $n_i\ge d+1$, each $H_i$ contains at least 
three vertices of degree $d$ if $d$ is even and at least two vertices of degree $d$
if $d$ is odd. Thus $H_i\in \mathcal{X}_d$ for $1\le i\le \lceil\frac{(l-k+1)d}{d-\kappa'}\rceil$.
By Corollary \ref{lamp} and Lemma \ref{cwlem}, $\lambda_{\lceil\frac{(l-k+1)d}{d-\kappa'}\rceil} (G)
\ge \min_{1\le i\le \lceil\frac{(l-k+1)d}{d-\kappa'}\rceil}\{\lambda_1(H_i)\} \ge \theta(d)$, 
contrary to the assumption. This finishes the proof.
\end{proof}

\section{Spectrum and toughness of regular graphs}
In this section, we prove Theorems \ref{onetough}, \ref{bipar}, \ref{tautough} and Corollary \ref{brouconj}.

\begin{proof}[\bf Proof of Theorem~\ref{onetough}]
We prove it by contradiction and assume that $t(G)< 1$. By definition, there exists
a subset $S\subset V(G)$ such that $\frac{|S|}{c(G-S)}< 1$. Let $s=|S|$, $c=c(G-S)$
and $H_1,H_2,\cdots,H_c$ be the components of $G-S$. For $1\le i\le c$, let $n_i=|V(H_i)|$
and $t_i$ be the number of edges between $H_i$ and $S$. Then $s<c$ and $t_i\ge\kappa'$
for $1\le i\le c$. Since $G$ is $d$-regular, $\sum_{i=1}^c t_i \le ds$.

As $c\kappa'\le \sum_{i=1}^c t_i\le ds\le d(c-1)$, we have $c(d-\kappa')\ge d$. If $\kappa'=d$, then
we get a contradiction. Thus we may assume that $\kappa' < d$, and so $c\ge \frac{d}{d-\kappa'}$.
We claim that there are at least $\lceil\frac{d}{d-\kappa'}\rceil$ indices $i$
such that $t_i < d$. Otherwise, there are at most $\lceil\frac{d}{d-\kappa'}\rceil -1$
indices $i$ such that $t_i < d$. In other words, there are at least
$c - \lceil\frac{d}{d-\kappa'}\rceil +1$ indices $i$ with $t_i \ge  d$. Thus
\begin{eqnarray*} 
\sum_{i=1}^c t_i 
&\ge & (c - \lceil\frac{d}{d-\kappa'}\rceil +1)d + (\lceil\frac{d}{d-\kappa'}\rceil -1)\kappa'\\
& =  & cd - (\lceil\frac{d}{d-\kappa'}\rceil -1)(d-\kappa') \\
& >  & cd - \frac{d}{d-\kappa'}(d-\kappa') =  cd - d \\
&\ge & ds, 
\end{eqnarray*}
contrary to $\sum_{i=1}^c t_i \le ds$.
Thus there are at least $\lceil\frac{d}{d-\kappa'}\rceil$ indices $i$ such that $t_i < d$. Without loss
of generality, we may assume these indices are $1,2,\cdots, \lceil\frac{d}{d-\kappa'}\rceil$.

For $1\le i\le \lceil\frac{d}{d-\kappa'}\rceil$, $n_i\ge d+1$. Otherwise, if $n_i\le d$, then
$dn_i=t_i + 2|E(H_i)|\le t_i+ n_i(n_i -1)\le t_i + d(n_i-1)$, which implies $t_i\ge d$, contrary to $t_i<d$.

Since $dn_i=t_i + 2|E(H_i)|$ for $1\le i\le \lceil\frac{d}{d-\kappa'}\rceil$, if $d$ is even, then $t_i$ 
is also even, and thus $t_i\le d-2$. If $d$ is odd, then $t_i\le d-1$. As $n_i\ge d+1$, each $H_i$
contains at least three vertices of degree $d$ if $d$ is even and at least two vertices of degree $d$
if $d$ is odd. Thus $H_i\in \mathcal{X}_d$ for $1\le i\le \lceil\frac{d}{d-\kappa'}\rceil$.
By Corollary \ref{lamp} and Lemma \ref{cwlem}, $\lambda_{\lceil\frac{d}{d-\kappa'}\rceil} (G)
\ge \min_{1\le i\le \lceil\frac{d}{d-\kappa'}\rceil}\{\lambda_1(H_i)\} \ge \theta(d)$, 
contrary to the assumption. This finishes the proof.
\end{proof}

\begin{lemma}
\label{biparlem}
For any bipartite regular graph $G$, $t(G)\le 1$.
\end{lemma}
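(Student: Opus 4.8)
The plan is to produce a single witnessing vertex subset $S$ for the bound $t(G)\le 1$, taking $S$ to be one side of the bipartition. First I would fix a bipartition $(A,B)$ of the vertex set of $G$, so that every edge of $G$ joins a vertex of $A$ to a vertex of $B$. Since $G$ is $d$-regular, counting the edges incident to each side gives $d|A|=e(A,B)=d|B|$; for $d\ge 1$ this forces $|A|=|B|$, and I set $n=|A|=|B|$.

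Next I would delete $S=A$ and examine $G-S=G[B]$. Because $G$ is bipartite, $B$ is an independent set, so $G[B]$ is simply $n$ isolated vertices and hence $c(G-A)=n$. Provided $n\ge 2$, the set $S=A$ is admissible in the definition of toughness (it satisfies $c(G-S)>1$), so that
\[
t(G)\le \frac{|A|}{c(G-A)}=\frac{n}{n}=1,
\]
which is exactly the claimed bound. The entire content of the lemma is thus that removing one color class of a bipartite regular graph shatters it into an independent set whose cardinality equals that of the deleted class.

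I do not expect any real obstacle here; the only point needing a remark is the degenerate case $n=1$, in which $G=K_2$ is complete, no subset $S$ satisfies $c(G-S)>1$, and toughness is conventionally $+\infty$. This case is therefore excluded from (or vacuous for) the statement, while for every connected bipartite $d$-regular graph on at least four vertices the argument above applies verbatim. The same cut $S=A$ is precisely what will be paired, through Theorem~\ref{bipar}, with the lower bound $t(G)\ge 1$ coming from the eigenvalue hypothesis to conclude $t(G)=1$.
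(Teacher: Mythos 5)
Your proof is correct and takes essentially the same approach as the paper: delete one colour class $S$, observe that what remains is an independent set of size $|S|$ (by regularity), and conclude $t(G)\le |S|/c(G-S)=1$. The paper's version is just a two-line statement of this same argument, without your (harmless, correct) aside about the degenerate $K_2$ case.
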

\begin{proof}[\bf Proof:]
Let $S$ be the set of vertices of one part of the bipartition. Then $c(G-S)=|S|$.
Thus $t(G)\le \frac{|S|}{c(G-S)}=1$.
\end{proof}

\begin{proof}[\bf Proof of Theorem~\ref{bipar}:]
We prove it by contradiction and assume that $t(G)\neq 1$. By Lemma \ref{biparlem}, $t(G)<1$.
By definition, there exists a subset $S\subset V(G)$ such that $\frac{|S|}{c(G-S)}< 1$.
Similar argument as Theorem~\ref{onetough} shows that there are at least 
$\lceil\frac{d}{d-\kappa'}\rceil$ components $H_i$ of $G-S$ such that $t_i < d$, where
$t_i$ is the number of edges between $H_i$ and $S$ for $1,2,\cdots, \lceil\frac{d}{d-\kappa'}\rceil$.
Let $n_i=|V(H_i)|$ and $m_i=|E(H_i)|$ for $1,2,\cdots, \lceil\frac{d}{d-\kappa'}\rceil$. 
Then $2m_i=dn_i - t_i \ge dn_i -d+1$. As each $H_i$ is also bipartite, $m_i\le n_i^2/4$.
Thus $n_i^2/2 \ge 2m_i \ge dn_i -d+1$, which implies that $n_i^2 -2dn_i +2d -2\ge 0$.
Hence $n_i\ge 2d$. By Corollary \ref{lamp}, 
\[
\lambda_{\lceil\frac{d}{d-\kappa'}\rceil} (G)
\ge \min_{1\le i\le \lceil\frac{d}{d-\kappa'}\rceil}\{\lambda_1(H_i) \} 
\ge \min_{1\le i\le \lceil\frac{d}{d-\kappa'}\rceil}\{\frac{2m_i}{n_i} \}
\ge \frac{dn_i -d+1}{n_i} \ge d-\frac{d-1}{2d},
\]
contrary to the assumption. This finishes the proof.
\end{proof}

\begin{proof}[\bf Proof of Theorem \ref{tautough}:]
Suppose that $S$ is a vertex-cut of $G$. Let $s=|S|$, $c=c(G-S)$
and $H_1,H_2,\cdots,H_c$ be the components of $G-S$. For $1\le i\le c$, let $n_i=|V(H_i)|$
and $t_i$ be the number of edges between $H_i$ and $S$. Then $t_i\ge\kappa'$ for $1\le i\le c$.
As $G$ is $d$-regular, $\sum_{i=1}^c t_i \le ds$.
Thus $c\kappa'\le \sum_{i=1}^c t_i \le ds$, which implies that $s/c \ge \kappa'/d$.
Hence $t(G)\ge \kappa'/d$.
\end{proof}

\begin{proof}[\bf Proof of Corollary \ref{brouconj}:]
By Corollary \ref{contrap}, $\lambda_2\ge d-\frac{2\kappa'}{d+1}$, which implies that
$\frac{2\kappa'}{\lambda_2(d+1)}\ge \frac{d}{\lambda_2}-1$.
If $d\ge 4$, then $\lambda_2\ge d-\frac{2\kappa'}{d+1}>2$. If $d=3$, then $\kappa'\le 2$,
and thus $\lambda_2\ge d-\frac{2\kappa'}{d+1}\ge 2$. By Theorem \ref{tautough},
\[
t(G)\ge \kappa'/d > \frac{\kappa'/d}{\frac{\lambda_2}{2}(1+\frac{1}{d})}
=\frac{2\kappa'}{\lambda_2(d+1)}\ge \frac{d}{\lambda_2}-1,
\]
which completes the proof.
\end{proof}

\section{Spectrum and spanning $k$-trees in regular graphs}

In this section, we prove Theorem \ref{kspanning}. We will use the following sufficient condition of 
the existence of a spanning $k$-tree obtained by Win~\cite{Win89}, which was also proved by 
Ellingham and Zha~\cite{ElZh00} with a new proof later.

\begin{theorem}[Ellingham and Zha~\cite{ElZh00}, Win~\cite{Win89}]
\label{EZWthm}
Let $k\ge 2$ and $G$ be a connected graph. If for any $S\subseteq V(G)$, $c(G-S)\le (k-2)|S| +2$,
then $G$ has a spanning $k$-tree.
\end{theorem}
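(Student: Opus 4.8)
The plan is to argue by contradiction. Assume $G$ is connected, satisfies $c(G-S)\le (k-2)|S|+2$ for every $S\subseteq V(G)$, and yet has no spanning $k$-tree. Since $G$ is connected it has spanning trees; among all of them I would choose one, $T$, minimizing the total degree excess
\[
f(T)=\sum_{v\in V(G)}\max\{\deg_T(v)-k,\,0\}.
\]
A spanning tree is a spanning $k$-tree precisely when $f(T)=0$, so the assumption forces $f(T)\ge 1$; hence the set $R=\{v:\deg_T(v)\ge k+1\}$ of ``overloaded'' vertices is nonempty. The goal is to show that $R$ itself (or a set built directly from it) already violates the hypothesis.

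First I would record a purely tree-theoretic component count. Since $T-R$ is a forest on $|V(G)|-|R|$ vertices obtained from $T$ by deleting the $\sum_{v\in R}\deg_T(v)-|E_T(R)|$ edges meeting $R$ (where $E_T(R)$ denotes the edges of $T$ inside $R$), a direct count gives
\[
c(T-R)=1-|R|+\sum_{v\in R}\deg_T(v)-|E_T(R)|.
\]
Using $\deg_T(v)\ge k+1$ on $R$ together with $|E_T(R)|\le |R|-1$ (as $T[R]$ is a forest) yields $c(T-R)\ge (k-1)|R|+2$, which exceeds the target threshold $(k-2)|R|+2$ by a margin of exactly $|R|$.

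That margin is essential, because passing from $T$ to $G$ can only merge components: $c(G-R)\le c(T-R)$. The heart of the argument, and the step I expect to be the main obstacle, is to show that the extremal choice of $T$ limits this merging to at most $|R|-1$ fusions, so that $c(G-R)\ge (k-1)|R|+2-(|R|-1)=(k-2)|R|+3>(k-2)|R|+2$, contradicting the hypothesis. To bound the merging I would use an edge-exchange argument: if an edge $e=xy$ of $G$ joins two distinct components of $T-R$, then the unique cycle of $T+e$ must pass through some $r\in R$; deleting a cycle-edge $rw$ incident to $r$ and adding $e$ yields a new spanning tree in which $\deg(r)$ drops by one (lowering its excess), while only $\deg(x)$ and $\deg(y)$ rise. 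Tracking $\Delta f=[\deg_T x\ge k]+[\deg_T y\ge k]-1-[\deg_T w\ge k+1]$ shows that whenever both endpoints have degree below $k$ the swap strictly lowers $f$, contradicting minimality; thus every reconnecting edge must have an endpoint already at the threshold, and charging these obstructions to the vertices of $R$ is what should cap the number of independent fusions at $|R|-1$. Making this bookkeeping precise — choosing which edge at $r$ to delete, and handling the case where $x$ or $y$ itself is overloaded — is the delicate part, and it may help to break ties in the extremal tree by a secondary potential (for instance, minimizing the number of overloaded vertices or $\sum_v\binom{\deg_T(v)}{2}$ among all trees attaining the minimum $f$) so that each contradiction is unambiguous.

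As a sanity check I would verify the extreme case $G=K_{1,k+1}$: here $f=1$, $R$ is the center, and $c(G-R)=k+1>k=(k-2)\cdot 1+2$, matching both the nonexistence of a spanning $k$-tree and the violation of the hypothesis. Finally, I would note an alternative route by induction on $|V(G)|$, deleting a carefully chosen pendant branch and restoring the degree bound when it is reattached; this avoids the global exchange bookkeeping but shifts the difficulty into showing that the hypothesis $c(G-S)\le (k-2)|S|+2$ is inherited by the smaller graph.
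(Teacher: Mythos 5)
The paper itself offers no proof of this statement --- it is quoted from Win~\cite{Win89} and Ellingham--Zha~\cite{ElZh00} --- so your attempt has to stand on its own, and it does not. Your framework is the classical one (a spanning tree $T$ minimizing the total excess $f(T)$, the overloaded set $R=\{v:\deg_T(v)\ge k+1\}$, the count $c(T-R)\ge(k-1)|R|+2$, and the exchange observation that any edge of $G$ joining two components of $T-R$ must have an endpoint of $T$-degree exactly $k$); all of that is correct and is indeed the opening of the known arguments. But the pivotal claim --- that minimality of $f$ caps the merging at $|R|-1$ fusions, so that $S=R$ already violates $c(G-S)\le(k-2)|S|+2$ --- is not merely left open (you call it ``the delicate part''); it is false. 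Take $k=3$ and let $T$ be the tree with a root $r$ adjacent to $v_1,v_2,v_3,v_4$, each $v_i$ adjacent to two leaves $a_i,b_i$, and let $G=T+v_1v_2$. The only spanning trees of $G$ are $T$ and $T+v_1v_2-rv_i$ for $i=1,2$, and each has exactly one vertex of degree $4$, so $G$ has no spanning $3$-tree and every spanning tree attains the minimum $f=1$ with $|R|=1$. For $T$ one has $c(T-R)=4$ but $c(G-R)=3=(k-2)|R|+2$: a fusion occurs although your bound allows $|R|-1=0$ of them, and $R$ does \emph{not} violate the hypothesis; the same holds for the other two minimizing trees ($R=\{v_1\}$ or $\{v_2\}$, again with $c(G-R)=3$). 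Since all three trees have the same degree sequence $(4,3,3,3,3,1,\dots,1)$, neither of your proposed tie-breaking potentials distinguishes them, so no refinement of the extremal choice rescues the plan of charging fusions to $R$.

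What the counterexample exhibits is exactly why the published proofs take a separating set strictly \emph{larger} than $R$: here the violating set is $S=\{r,v_1\}$, with $c(G-S)=5>4=(k-2)|S|+2$, and it contains a vertex of $T$-degree exactly $k$ --- precisely the kind of vertex your own exchange computation flags as an obstruction. Win's argument (and, in a different guise, Ellingham--Zha's) iterates the neutral exchanges you describe, in which the excess migrates from $r$ onto a degree-$k$ endpoint, and builds $S$ out of all vertices that can become overloaded along such exchange sequences; the inequality $c(G-S)>(k-2)|S|+2$ is then obtained by a global count over this enlarged set, not by bounding fusions against $|R|$. So your proposal reproduces the correct first step but is missing the actual engine of the proof, and the specific shortcut it proposes is refuted by a $13$-vertex example; the closing inductive ``alternative route'' is likewise only a gesture, since (as you note) the hypothesis is not obviously inherited by the reduced graph.
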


Now we are ready to prove Theorem \ref{kspanning}.

\begin{proof}[\bf Proof of Theorem \ref{kspanning}]
We prove it by contradiction and assume that $G$ does not have spanning $k$-trees for $k\ge 3$.
By Theorem~\ref{EZWthm}, there exists a subset $S\subseteq V(G)$
such that 
\begin{equation}
\label{csrel}
c(G-S)\ge (k-2)|S| +3. 
\end{equation}
Let $s=|S|$, $c=c(G-S)$
and $H_1,H_2,\cdots,H_c$ be the components of $G-S$. For $1\le i\le c$, let $n_i=|V(H_i)|$
and $t_i$ be the number of edges between $H_i$ and $S$. Then $t_i\ge\kappa'$
for $1\le i\le c$. Since $G$ is $d$-regular, $c\kappa'\le \sum_{i=1}^c t_i \le ds$.
By (\ref{csrel}), $s\le (c-3)/(k-2)$. Thus $c\kappa'\le d(c-3)/(k-2)$, which implies that
\begin{equation}
\label{cform}
c\left(d-(k-2)\kappa'\right)\ge 3d.
\end{equation}
Thus $l= d-(k-2)\kappa' > 0$, contrary to (i). This proves (i). In the following,
we continue to prove (ii).

By (\ref{cform}), $c\ge \lceil\frac{3d}{l}\rceil$. 
We claim that there are at least $\lceil\frac{3d}{l}\rceil$ indices $i$ such that 
$t_i < d/(k-2)$. Otherwise, there are at most $\lceil\frac{3d}{l}\rceil -1$
indices $i$ such that $t_i < d/(k-2)$. In other words, there are at least
$c - \lceil\frac{3d}{l}\rceil +1$ indices $i$ with $t_i \ge d/(k-2)$. Thus
\begin{eqnarray*} 
ds\ge \sum_{i=1}^c t_i 
&\ge & (c - \lceil\frac{3d}{l}\rceil +1)\cdot\frac{d}{k-2} + (\lceil\frac{3d}{l}\rceil -1)\kappa'\\
& =  & \frac{cd}{k-2} - (\lceil\frac{3d}{l}\rceil -1)(\frac{d}{k-2}-\kappa') \\
& >  & \frac{cd}{k-2} - \frac{3d}{l}\cdot (\frac{d}{k-2}-\kappa')\\
& =  & \frac{cd}{k-2} - \frac{3d}{k-2} =d\cdot \frac{c-3}{k-2}\\
&\ge & ds, 
\end{eqnarray*}
a contradiction. This proves that there are at least 
$\lceil\frac{3d}{l}\rceil$ indices $i$ such that $t_i < d/(k-2)$. Without loss
of generality, we may assume these indices are $1,2,\cdots, \lceil\frac{3d}{l}\rceil$.

For $1\le i\le \lceil\frac{3d}{l}\rceil$, since $t_i < d/(k-2)$, it is not hard to get $n_i\ge d+1$
by counting total degree of $H_i$. By Corollary \ref{lamp}, $\lambda_{\lceil\frac{3d}{l}\rceil} (G)
\ge \min_{1\le i\le \lceil\frac{3d}{l}\rceil}\{\lambda_1(H_i)\} \ge d-\frac{d}{(k-2)(d+1)}$, 
contrary to the assumption. This finishes the proof.
\end{proof}

\section{Final Remarks}

In this paper, we determined some new connections between the spectrum of a regular graph and its generalized connectivity, toughness or the existence of spanning $k$-trees. Some of our results are best possible. For example, the constructions from \cite[Section 3]{CiWo14} show that the upper bound from Theorem \ref{onetough} is best possible. Also, Corollary \ref{2-conn} is best possible when $d=4$. To see this, construct a $4$-regular graph by taking two disjoint copies of $X_4$ and adding a new vertex adjacent to the $4$ vertices ($2$ in each copy of $X_4$) of degree $3$. The resulting graph is $4$-regular, has vertex-connectivity $1$ and its second largest eigenvalue equals the upper bound from Corollary \ref{2-conn}.

It would interesting to improve and generalize our results to general graphs and eigenvalues of Laplacian matrix, signless Laplacian or normalized Laplacian.

\end{document}